\def\red{\color{red}}
\def\rr{{\mathbb R}}
\def\rn{{\mathbb{R}^n}}
\def\cc{{\mathbb C}}
\def\nn{{\mathbb N}}
\def\cf{{\mathcal F}}
\def\cm{{\mathcal M}}
\def\fz{\infty }
\def\az{\alpha}
\def\gz{{\gamma}}
\def\oz{{\omega}}
\def\tz{\theta}
\def\lf{\left}
\def\r{\right}
\def\ls{\lesssim}
\def\noz{\nonumber}
\def\dist{\mathop\mathrm{\,dist\,}}
\def\BMO{\mathop\mathrm{\,BMO\,}}
\def\CMO{\mathop\mathrm{\,CMO\,}}
\def\BMOa{\mathop\mathrm{\,BMO_\alpha\,}}
\def\CMOa{\mathop\mathrm{\,CMO_\alpha\,}}
\def\loc{{\mathop\mathrm{\,loc\,}}}
\def\supp{\mathop\mathrm{\,supp\,}}
\def\XXint#1#2#3{{\setbox0=\hbox{$#1{#2#3}{\int}$ }
\vcenter{\hbox{$#2#3$ }}\kern-.6\wd0}}
\def\({\left(}
\def \){ \right)}
 \def\ve{{\varepsilon}}
\newtheorem{theorem}{Theorem}[section]
\newtheorem{lemma}[theorem]{Lemma}
\newtheorem{corollary}[theorem]{Corollary}
\newtheorem{proposition}[theorem]{Proposition}
\theoremstyle{definition}
\newtheorem{remark}[theorem]{Remark}
\renewcommand{\appendix}{\par
   \setcounter{section}{0}%
   \setcounter{subsection}{0}%
   \setcounter{subsubsection}{0}%
   \gdef\thesection{\@Alph\c@section}%
   \gdef\thesubsection{\@Alph\c@section.\@arabic\c@subsection}%
   \gdef\theHsection{\@Alph\c@section.}%
   \gdef\theHsubsection{\@Alph\c@section.\@arabic\c@subsection}%
   \csname appendixmore\endcsname
 }
\numberwithin{equation}{section}
\begin{document}

\title{\bf\Large The $L^p$-to-$L^q$ Compactness of Commutators with $p>q$
\footnotetext{\hspace{-0.35cm} 2020 {\it
Mathematics Subject Classification}. Primary 47B47;
Secondary 42B20, 42B25, 42B35, 46E30.
\endgraf {\it Key words and phrases.} Commutator, compactness,
Calder\'on--Zygmund operator,
Fr\'{e}chet--Kolmogorov theorem.
\endgraf Tuomas Hyt\"onen is supported by
the Academy of Finland (Grant Nos.\ 314829 and 346314). Kangwei Li is supported by the National Natural Science Foundation of China (Grant No. 12001400).
Jin Tao and Dachun Yang are supported by
the National Key Research and Development Program of China
(Grant No. 2020YFA0712900) and
the National Natural Science Foundation of China
(Grant Nos. 11971058 and 12071197).}}
\author{Tuomas Hyt\"onen, Kangwei Li, Jin Tao and Dachun Yang\footnote{Corresponding author,
E-mail: \texttt{dcyang@bnu.edu.cn}/{\red August 22, 2022}/Final version.}}
\date{}
\maketitle

\vspace{-0.7cm}

\begin{center}
\begin{minipage}{13cm}
{\small {\bf Abstract}\quad
Let $1<q<p<\fz$, $\frac1r:=\frac1q-\frac1p$,
and $T$ be a non-degenerate Calder\'on--Zygmund operator.
We show that the commutator $[b,T]$
is compact from $L^p({\mathbb R}^n)$ to
$L^q({\mathbb R}^n)$ if and only if the symbol
$b=a+c$ with $a\in L^r({\mathbb R}^n)$ and $c$ being any constant.
Since both the corresponding Hardy--Littlewood maximal operator
and the corresponding Calder\'on--Zygmund maximal operator
are not bounded from $L^p({\mathbb R}^n)$ to $L^q({\mathbb R}^n)$,
we take the full advantage of the compact support of
the approximation element in $C_{\rm c}^\infty({\mathbb R}^n)$,
which seems to be redundant for many corresponding estimates when $p\leq q$
but to be crucial when $p>q$. We also extend the results to the multilinear case.}
\end{minipage}
\end{center}

\vspace{0.1cm}


\vspace{0.1cm}

\section{Introduction\label{s1}}

Let $T$ denote a Calder\'on--Zygmund operator
and $\BMO(\rn)$ be the space of functions with bounded mean oscillation
introduced by John and Nirenberg \cite{jn61}.
In the celebrated article of Coifman et al. \cite{crw76},
they proved that the commutator
$$[b,T](f):=bT(f)-T(bf)$$
is bounded on $L^p(\rn)$ with $p\in(1,\fz)$ if and only if
the symbol $b$ belongs to $\BMO(\rn)$.
Later, Uchiyama \cite{u78} showed that
$[b,T]$ is compact on $L^p(\rn)$ with $p\in(1,\fz)$ if and only if
the symbol $b$ belongs to $\CMO(\rn)$, namely,
the closure of $C_{\rm c}^\fz(\rn)$ in $\BMO(\rn)$.
Here and hereafter, $C_{\rm c}^\fz(\rn)$ denotes the set of all
infinitely differentiable functions on $\rn$ with compact support.
Since then, the commutators, generated by Calder\'on--Zygmund operators
and $\BMO(\rn)$ functions, have been
widely studied in various branches of mathematics;
see, for instance, \cite{cdlw19,cllv22,dgklwy21,dllw19,ns17cm,ns17,sy18}.

Similar characterizations of the boundedness and the compactness
also hold true in the fractional setting.
To be precise, let $1<p<q<\fz$ and $b$ belong to the fractional
variant of $\BMO(\rn)$, that is,
$$\|b\|_{\BMOa(\rn)}:=\sup_{\mathrm{cube}\ Q\subset\rn}
\frac1{|Q|^{1+\az/n}}\int_Q\lf|b(x)-b_Q\r|\,dx<\fz$$
with $\az\in(0,1)$ and
$$b_Q:=\frac1{|Q|}\int_Q b(y)\,dy.$$
Then $\BMOa(\rn)$ when $\az=0$ coincides with $\BMO(\rn)$,
and one can derive the boundedness of commutators $[b,T]$
from $L^p(\rn)$ to $L^q(\rn)$ with the help of
fractional integrals; see, for instance, \cite[Theorem 1.0.1]{h21}.
The corresponding characterization of the
compactness also holds true, for any $\az\in[0,1)$, via choosing $b$ in
the closure of $C_{\rm c}^\fz(\rn)$ in $\BMOa(\rn)$; see, for instance, \cite[Theorem 1.8]{ghwy22}.
The case $\az=1$ requires a different formulation in terms of
a suitably defined space $\CMOa(\rn)$
which agrees with the aforementioned closure for $\az\in[0,1)$,
but not for $\az=1$; see \cite[Theorems 1.7 and 1.8]{ghwy22}.
When considering the boundedness and the compactness of commutators,
we always use some classical operators, such as (fractional) maximal operators
and fractional integrals, which map $L^p(\rn)$ to $L^q(\rn)$ with $p\le q$.
However, for the case $p>q$, these operators are no longer bounded.
This may be a possible reason why the range $p>q$ has attracted little attention for a long time.

Recently, the first author \cite{h21} of this article completely settled
the case $p>q$ on the boundedness of commutators $[b,T]$
from $L^p(\rn)$ to $L^q(\rn)$, and hence clarified
the boundedness of commutators for all $p,q\in(1,\fz)$.
This surprising characterization says that,
for any ``non-degenerate'' Calder\'on--Zygmund operator $T$,
the commutator $[b,T]$ is bounded from $L^p(\rn)$ to $L^q(\rn)$
with $1<q<p<\fz$ if and only if $b\in L^r(\rn)$ modulo constants,
where $\frac1r:=\frac1q-\frac1p$.
As an application, following an approach proposed by Lindberg \cite{Lind17},
the first author \cite{h21} of this article further showed
that every $f\in L^p(\rn)$ can be represented as a convergent series of
normalized Jacobians $Ju=\nabla u$ of $u\in \dot W^{1,np}(\rn)^n$,
which extends a famous result of Coifman et al. \cite{clms93}
and supports a conjecture of Iwaniec \cite{i97}
about the solvability of the prescribed Jacobian equation $Ju=f\in L^p(\rn)$.

In another direction, a quite general approach to the compactness of linear operators
on the weighted space $L^p_\omega(\rn)$ has been established very recently
by the first author of this article and Lappas \cite{hl22RMI,hl22IMNS},
There are also recent results about the compactness of Calder\'on--Zygmund 
commutators in the two weight setting,
namely, from $L^p_\sigma(\rn)$ to $L^q_\omega(\rn)$ with different weights; we refer the reader to
Lacey and Li \cite{ll22} for case $p=q$ and to Oikari, Sinko and one of us \cite{HOS} for $p<q$.
But the compactness of $[b,T]$ from $L^p(\rn)$ to $L^q(\rn)$ with $p>q$
is still unknown so far, even in the unweighted case.

The main purpose of this article is to investigate the
compactness of commutators $[b,T]$ from $L^p(\rn)$ to $L^q(\rn)$
for any given $1<q<p<\fz$.
Besides natural interest in its own right, this question is motivated 
by the needs of Lindberg's program to the mentioned Jacobian 
conjecture of Iwaniec, as further developed in \cite{Lind22}.
Indeed, we show in Section \ref{s2} that
the boundedness of $[b,T]$ is \emph{equivalent} to
the compactness of $[b,T]$ in this range;
see Theorem \ref{thm-cpt-char} below.
This equivalence is essentially based on the density of
$C_{\rm c}^\fz(\rn)$ in $L^r(\rn)$.
Since the implication ``compact $\Longrightarrow$ bounded'' is obvious,
the main contribution of this article is
the reverse implication ``bounded $\Longrightarrow$ compact''
obtained in Theorems \ref{thm-cpt} and \ref{thm=cpt-Toz} below.
To this end, we apply the criterion of the compactness in Lebesgue spaces,
namely, the Fr\'{e}chet--Kolmogorov theorem; see Lemma \ref{lem-FK} below.
Since both the corresponding Hardy--Littlewood maximal operator
and the corresponding Calder\'on--Zygmund maximal operator
are not bounded from $L^p({\mathbb R}^n)$ to $L^q({\mathbb R}^n)$,
we take the full advantage of the compact support of
the approximating element in $C_{\rm c}^\infty({\mathbb R}^n)$,
which seems to be redundant for many corresponding estimates when $p\leq q$
but to be crucial when $p>q$.
Moreover, we also discuss the corresponding multilinear and iterated results
in Section \ref{s3}. Apart from using the median method
and the expectation of random signs,
we also apply a characterization of $\dot L^r(\rn)$
via sparse collections of cubes; see Proposition \ref{prop:lr} below.

Throughout this article, we denote by $C$ and $\widetilde{C}$
positive constants that are independent of the main parameters
under consideration, but they may vary from line to line.
Moreover, we use $C_{(\gamma,\ \beta,\ \ldots)}$ to denote
a positive constant depending on the indicated
parameters $\gamma,\ \beta,\ \ldots$.
Constants with subscripts, such as $C_{0}$ and $A_1$,
do not change in different occurrences.
Moreover, the {symbol} $f\lesssim g$ represents that
$f\le Cg$ for some positive constant $C$.
If $f\lesssim g$ and $g\lesssim f$, we then write $f\sim g$.
If $f\le Cg$ and $g=h$ or $g\le h$, we then write $f\ls g\sim h$
or $f\ls g\ls h$, rather than $f\ls g=h$ or $f\ls g\le h$.
For any $p\in[1,\fz]$, let $p'$ denote its \emph{conjugate index},
that is, $p'$ satisfies  $1/p+1/p'=1$.
We use $\mathbf{0}$ to denote the origin of $\rn$ and,
for any set $E\subset \rn$, we use $\mathbf{1}_E$ to denote its
characteristic function.
Also, for any $\gz\in(0,\fz]$, the \emph{Lebesgue space}
$L^\gz(\rn)$ is defined to be the set of all the Lebesgue measurable
functions $f$ on $\rn$ such that
$$\|f\|_\gz:=\left[\int_{\rn}|f(x)|^\gz\,dx\right]^{1/\gz}$$
with usual modification made when $\gz=\fz$.
For any $r\in(0,\fz]$, we use $L^r_\loc(\rn)$ to denote the set
of all the Lebesgue measurable functions $f$ on $\rn$ such that
$\|f\mathbf{1}_B\|_r<\fz$ for any ball $B\subset\rn$.
Furthermore, for any $r\in(0,\fz]$, let
\[
\dot L^r(\rn):=\lf\{b\in L^r_\loc(\rn):\
\| b\|_{\dot L^r(\rn)}=\inf_c \| b-c\|_{r}<\infty\r\};
\]
for any $b\in L^1_\loc(\rn)$ and any bounded set $S\subset\rn$ with $|S|>0$,
let
$$\langle b \rangle_S:=\frac{1}{|S|}\int_S b(x)\,dx.$$

\section{Main results and proofs\label{s2}}
To obtain the compactness,
we apply the following Fr\'{e}chet--Kolmogorov theorem
which can be found in, for instance, \cite[p.\,275]{y95} and \cite{T51};
see also \cite[Theorem 1.1]{XYY}.
\begin{lemma}[Fr\'{e}chet--Kolmogorov]\label{lem-FK}
Let $q \in (0,\infty)$.
Then $\cf\subset L^q(\rn)$ is relatively compact in $L^q(\rn)$ if and only if
\begin{enumerate}
\item[\rm(i)] $\cf$ is bounded, namely,
$$\sup_{f\in \cf} \|f\|_{q}<\infty;$$
\item[\rm(ii)] $\cf$ uniformly vanishes at infinity, namely,
$$\lim_{M\to\infty}\lf\|f{\mathbf 1}_{\{x\in\rr:\ |x|\ge M\}}\r\|_{q}=0\quad
\text{uniformly\,\,for\,\,all}\,\, f\in \cf.$$
\item[\rm(iii)] $\cf$ is equicontinuous, namely,
$$\lim_{\xi\to\mathbf{0}}\|f(\cdot +\xi)-f(\cdot)\|_{q}=0\quad
\text{uniformly\,\,for\,\,all}\,\, f\in \cf.$$
\end{enumerate}
\end{lemma}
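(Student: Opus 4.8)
The plan is to prove the two implications separately; throughout I would use that $L^q(\rn)$ is a complete metric space --- under the metric $\|f-g\|_q$ when $q\ge1$ and under $\|f-g\|_q^q$ when $q\in(0,1)$ --- so that ``relatively compact'' may be replaced by ``totally bounded''. For the (easy) necessity, suppose $\cf$ is relatively compact; then $\cf$ is bounded, which is (i). For (ii) and (iii), fix $\ve>0$, cover $\cf$ by finitely many $\ve$-balls centred at $f_1,\dots,f_N\in L^q(\rn)$, and choose $g_j\in C_{\rm c}(\rn)$ with $\|f_j-g_j\|_q<\ve$; each $g_j$ manifestly satisfies $\|g_j\mathbf{1}_{\{|x|\ge M\}}\|_q\to0$ as $M\to\fz$ (it has compact support) and $\|g_j(\cdot+\xi)-g_j\|_q\to0$ as $\xi\to\mathbf{0}$ (by uniform continuity and dominated convergence). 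Taking the maximum over $j$ and using the (quasi-)triangle inequality upgrades these to the required uniform statements for all $f\in\cf$.

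The sufficiency is the substantive part. Assuming (i)--(iii), I would show $\cf$ is totally bounded: fix $\ve>0$ and a nonnegative $\vz\in C_{\rm c}^\fz(\rn)$ with $\int_\rn\vz=1$ and support in $\ov{B(\mathbf{0},1)}$, and set $\vz_t:=t^{-n}\vz(\cdot/t)$. \emph{First (smoothing):} from the identity $f*\vz_t-f=\int_\rn[f(\cdot-y)-f]\vz_t(y)\,dy$ and Minkowski's integral inequality, $\|f*\vz_t-f\|_q\le\sup_{|y|\le t}\|f(\cdot-y)-f\|_q$, which by (iii) is $<\ve$ for every $f\in\cf$ once $t$ is small. \emph{Second (truncation at infinity):} by (ii) fix $M$ with $\|f\mathbf{1}_{\{|x|\ge M\}}\|_q<\ve$ for all $f\in\cf$; since $f*\vz_t(x)=(f\mathbf{1}_{\{|z|\ge M\}})*\vz_t(x)$ whenever $t<M$ and $|x|\ge2M$, Young's inequality yields $\|(f*\vz_t)\mathbf{1}_{\{|x|\ge2M\}}\|_q\le\|f\mathbf{1}_{\{|z|\ge M\}}\|_q<\ve$. \emph{Third (finite-dimensional reduction):} by (i) together with Young's and H\"older's inequalities, $\|f*\vz_t\|_\fz\le\|f\|_q\|\vz_t\|_{q'}$ and $\|\nabla(f*\vz_t)\|_\fz\le\|f\|_q\|\nabla\vz_t\|_{q'}$ are bounded uniformly over $f\in\cf$, so the family $\{(f*\vz_t)|_{\ov{B(\mathbf{0},2M)}}:f\in\cf\}$ is uniformly bounded and equi-Lipschitz, hence relatively compact in $C(\ov{B(\mathbf{0},2M)})$ by the Arzel\`{a}--Ascoli theorem, and hence relatively compact in $L^q(B(\mathbf{0},2M))$ since $B(\mathbf{0},2M)$ has finite measure. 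Therefore $\{(f*\vz_t)\mathbf{1}_{B(\mathbf{0},2M)}:f\in\cf\}$ is totally bounded in $L^q(\rn)$ and, by the first two steps, it lies within $2\ve$ of $\cf$; as $\ve>0$ is arbitrary, $\cf$ is totally bounded.

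I expect the genuine obstacle to lie in the range $q\in(0,1)$, where the smoothing scheme breaks down: Minkowski's integral inequality is unavailable, and the estimates $\|g*\vz_t\|_q\le\|g\|_q$ and $\|f*\vz_t\|_\fz\le\|f\|_q\|\vz_t\|_{q'}$ are simply false (indeed $\langle f\rangle_S$ need not remain bounded over an $L^q$-bounded family). In that case I would work with the translation-invariant metric $\|f-g\|_q^q$ and replace convolution with a coarser averaging device; since only $q\in(1,\fz)$ is needed in this article, I would ultimately defer to the sources \cite{y95,T51,XYY} for that regime.
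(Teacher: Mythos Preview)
The paper does not supply its own proof of this lemma; it simply quotes the result with references to \cite{y95}, \cite{T51}, and \cite{XYY}. So there is nothing to compare your argument against, and your sketch is a reasonable self-contained substitute. For $q\ge1$ your three-step sufficiency proof (mollify, truncate, Arzel\`a--Ascoli) is standard and correct; the necessity argument via a finite $\ve$-net and $C_{\rm c}$-approximation is also fine. You rightly flag that Minkowski's integral inequality, Young's inequality, and the H\"older bound $\|f\ast\vz_t\|_\infty\le\|f\|_q\|\vz_t\|_{q'}$ all break down when $q\in(0,1)$, so your scheme does not cover the full range stated in the lemma; since the paper only ever invokes Lemma~\ref{lem-FK} with $q\in(1,\infty)$, deferring the quasi-Banach range to \cite{T51,XYY} is exactly what the authors themselves do.
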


In what follows, we consider a Calder\'on--Zygmund kernel $K$
under the standard conditions
\begin{align}\label{size}
|K(x,y)|\le \frac{C_{(K)}}{|x-y|^n}\quad{\rm for\ any}\
x,y\in\rn\ {\rm and}\ x\neq y,
\end{align}
and, for any $x,\widetilde{x},y\in\rn$ with $|x-\widetilde{x}|<\frac12 |x-y|$,
\begin{align}\label{regular}
|K(x,y)-K(\widetilde{x},y)|+|K(y,x)-K(y,\widetilde{x})|
\le \frac{C_{(K)}}{|x-y|^n}\oz\lf(\frac{|x-\widetilde{x}|}{|x-y|}\r),
\end{align}
where $C_{(K)}$ denotes some positive constant depending only on $K$
and the modulus of continuity $\oz:\ [0,1)\to[0,\fz)$ is increasing.
Moreover, we say that $\oz$ satisfies the \emph{Dini condition} if
\begin{align}\label{Dini}
\int_{0}^1 \frac{\oz(t)}{t}\,dt<\fz.
\end{align}
Notice that $\oz(t):=t^\az$ for any $t\in[0,\fz)$ and
some $\az\in(0,1]$ satisfies the Dini condition.
As usual, we assume that the Calder\'on--Zygmund operator $T$
is bounded on $L^p(\rn)$ for any $p\in(1,\fz)$.

We also need the boundedness of the following two maximal operators on $L^p(\rn)$.
Recall that the \emph{Hardy--Littlewood maximal operator}
$\cm$ is defined by setting,
for any $f\in L^1_\loc(\rn)$ and $x\in\rn$,
$$\cm(f)(x):=\sup_{{\rm ball}\ B\ni x}\frac1{|B|}\int_B|f(y)|\,dy;$$
and the \emph{maximal truncated operator} $T^\ast$ is defined by setting,
for any $f\in C_{\rm c}^\fz(\rn)$ and $x\in\rn$,
$$T^\ast(f)(x):=\sup_{\zeta\in(0,\fz)}\lf|\int_{\{y\in\rn:\ |x-y|>\zeta\}}
K(x,y)f(y)\,dy\r|,$$
where the kernel $K$ satisfies \eqref{size}, \eqref{regular}, and \eqref{Dini}.
The following boundedness of both $\cm$ and $T^\ast$ on $L^p(\rn)$ are well known;
see, for instance, \cite[Theorems 2.5 and 5.14]{d01}.
\begin{lemma}\label{lem-max-bdd}
Let $p\in(1,\fz)$. Then both $\cm$ and $T^\ast$ are bounded on $L^p(\rn)$.
\end{lemma}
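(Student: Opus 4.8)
The plan is to treat $\cm$ and $T^\ast$ separately and reduce each to classical Calder\'on--Zygmund machinery. For $\cm$, I would first establish the weak type $(1,1)$ bound
$$\lf|\lf\{x\in\rn:\ \cm(f)(x)>\lz\r\}\r|\le \frac{C}{\lz}\|f\|_1,\qquad \lz\in(0,\fz),$$
by applying a Vitali-type covering lemma to the balls realizing the maximal averages; combining this with the trivial estimate $\|\cm(f)\|_\fz\le\|f\|_\fz$ and invoking the Marcinkiewicz interpolation theorem then yields the boundedness of $\cm$ on $L^p(\rn)$ for every $p\in(1,\fz)$.

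For $T^\ast$ the argument is longer. As a preliminary, using the $L^2(\rn)$-boundedness of $T$ together with the size condition \eqref{size} and the regularity \eqref{regular} under the Dini condition \eqref{Dini} (which yields the H\"ormander-type condition $\sup_{y,\,\widetilde y}\int_{\{|x-y|>2|y-\widetilde y|\}}|K(x,y)-K(x,\widetilde y)|\,dx<\fz$), the Calder\'on--Zygmund decomposition shows that $T$ is of weak type $(1,1)$. The core of the matter is then \emph{Cotlar's inequality}: for each fixed $s\in(0,1)$,
$$T^\ast(f)(x)\ls \cm(f)(x)+\lf[\cm\lf(|T(f)|^s\r)(x)\r]^{1/s},\qquad x\in\rn,\ f\in C_{\rm c}^\fz(\rn).$$
To prove it, fix $x\in\rn$ and $\zeta\in(0,\fz)$, write $f=f\mathbf 1_{B(x,\zeta)}+f\mathbf 1_{B(x,\zeta)^{\com}}=:f_1+f_2$ so that the $\zeta$-truncated integral at $x$ equals $T(f_2)(x)$, and compare $T(f_2)(x)$ with $T(f_2)(z)$ for $z\in B(x,\zeta/2)$: by \eqref{regular} and a dyadic decomposition of $\{y:\ |x-y|>\zeta\}$ into annuli, the difference is bounded by $C\cm(f)(x)\sum_{k\ge0}\oz(2^{-k})\ls\cm(f)(x)$, where the last estimate uses \eqref{Dini}. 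Raising the resulting pointwise bound $|T(f_2)(x)|\le |T(f)(z)|+|T(f_1)(z)|+C\cm(f)(x)$ to the power $s$ and averaging in $z$ over $B(x,\zeta/2)$, the term with $T(f)(z)$ contributes $[\cm(|T(f)|^s)(x)]^{1/s}$, while the term with $T(f_1)(z)$ is controlled by Kolmogorov's inequality together with the weak $(1,1)$ bound for $T$ (noting $\|T(f_1)\|_{L^{1,\infty}}\ls\|f_1\|_1\ls |B(x,\zeta)|\,\cm(f)(x)$), contributing $\cm(f)(x)$; taking the supremum over $\zeta\in(0,\fz)$ gives the claimed inequality.

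To conclude, fix $p\in(1,\fz)$ and choose $s\in(0,1)$, so that $p/s>1$. Then Cotlar's inequality, the $L^p(\rn)$- and $L^{p/s}(\rn)$-boundedness of $\cm$ established above, and the assumed $L^p(\rn)$-boundedness of $T$ give
$$\|T^\ast(f)\|_p\ls \|\cm(f)\|_p+\lf\|\cm\lf(|T(f)|^s\r)\r\|_{p/s}^{1/s}\ls \|f\|_p+\|T(f)\|_p\ls\|f\|_p,$$
as desired. I expect the main obstacle to be the verification of Cotlar's inequality — in particular, first deducing the weak type $(1,1)$ estimate for $T$ from the Dini condition on the kernel and then carefully organizing the annular estimate for the truncation difference; the remaining steps are routine. (As noted in the excerpt, all of this is classical and recorded in \cite[Theorems 2.5 and 5.14]{d01}.)
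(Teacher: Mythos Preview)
Your sketch is correct and follows the classical route (Vitali covering plus Marcinkiewicz for $\cm$; Cotlar's inequality derived from the kernel regularity and the weak type $(1,1)$ of $T$ for $T^\ast$). The paper itself supplies no proof of this lemma at all: it simply records the result as well known and cites \cite[Theorems~2.5 and~5.14]{d01}. What you have written is precisely the standard argument found in that reference, so there is nothing to compare beyond noting that you have spelled out what the paper leaves to citation.
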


To use the density argument of compact operators,
we adapt the smooth truncated technique same as in \cite{kl01,cc13,tyyz21PA}.
Let $\eta\in(0,\fz)$ and $T_\eta$ be the smooth truncated Calder\'on--Zygmund operator
of $T$, that is, $T_\eta$ is generated by the kernel
\begin{align}\label{Keta-def}
K_\eta(x,y):=K(x,y)\lf[1-\varphi\lf(\frac{|x-y|}{\eta}\r)\r]\quad
\text{for any }x,y\in\rn
\end{align}
with $\varphi\in C^\fz([0,\fz))$ satisfying both
\begin{align}\label{phi-def}
0\le\varphi\le1{\quad \rm and\quad }
\varphi(x)=
\begin{cases}
1, &x\in[0,1/2],\\
0, &x\in[1,\fz].
\end{cases}
\end{align}
In what follows, for any $p,q\in(1,\fz)$,
we use $\|T\|_{p\to q}$ to denote the \emph{operator norm} of $T$
from $L^p(\rn)$ to $L^q(\rn)$.
For any $p\in[1,\infty]$, we denote by $p'$ its \emph{conjugate exponent},
namely, $1/p+1/p'=1$.
In addition, $C_{\rm c}^1(\rn)$ denotes the set of all
the differentiable functions $b$ on $\rn$ such that
$b$ has compact support and its gradient $\nabla b$ is continuous.
Then we have the following approximation.
In what follows, the symbol $\eta\to0^+$ means $\eta\in(0,\fz)$ and $\eta\to0$.
\begin{lemma}\label{lem-Teta}
Let $1<q<p<\fz$, $b\in C_{\rm c}^1(\rn)$,
and $T$ be a Calder\'on--Zygmund operator whose kernel
satisfies \eqref{size}, \eqref{regular}, and \eqref{Dini}.
Then, for any $\eta\in(0,\fz)$,
$[b,T]-[b,T_\eta]$ is bounded from $L^p(\rn)$ to $L^q(\rn)$ and
$$\lim_{\eta\to0^+}\lf\|[b,T]-[b,T_\eta]\r\|_{p\to q}=0.$$
\end{lemma}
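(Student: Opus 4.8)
The plan is to estimate the difference operator $[b,T]-[b,T_\eta]=[b,T-T_\eta]$ directly from its kernel and exploit the compact support of $b$ together with the fact that the kernel of $T-T_\eta$ is supported in the region $|x-y|\le\eta$. Write $D_\eta:=T-T_\eta$, whose kernel is $K(x,y)\varphi(|x-y|/\eta)$ and hence is supported where $|x-y|<\eta$. First I would record the pointwise identity
\begin{align}\label{pr-id}
\lf([b,D_\eta]f\r)(x)=\int_{\{y:\ |x-y|<\eta\}}K(x,y)\varphi\lf(\tfrac{|x-y|}{\eta}\r)
\lf[b(x)-b(y)\r]f(y)\,dy,
\end{align}
so that, using the size condition \eqref{size} and the Lipschitz bound $|b(x)-b(y)|\le\|\nabla b\|_\fz|x-y|$ coming from $b\in C_{\rm c}^1(\rn)$,
\begin{align}\label{pr-point}
\lf|\lf([b,D_\eta]f\r)(x)\r|\ls \|\nabla b\|_\fz\int_{\{y:\ |x-y|<\eta\}}\frac{|f(y)|}{|x-y|^{n-1}}\,dy
\ls \eta\,\|\nabla b\|_\fz\,\cm(f)(x).
\end{align}
The last step is the standard estimate for the truncated fractional-type integral of order $1$, bounding it by $\eta$ times the Hardy--Littlewood maximal function (splitting the ball $B(x,\eta)$ into dyadic annuli and summing the geometric series in the radii).

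Next I would bring in the compact support of $b$. Since $\supp b\subset B(\mathbf 0,R)$ for some $R\in(0,\fz)$, the integrand in \eqref{pr-id} vanishes unless either $x\in B(\mathbf 0,R)$ or $y\in B(\mathbf 0,R)$; in the latter case $|x|<R+\eta$. Hence, for $\eta\in(0,1]$, the function $[b,D_\eta]f$ is supported in the fixed ball $B(\mathbf 0,R+1)$. Combining this with \eqref{pr-point} and Hölder's inequality with exponents $p/q$ and $(p/q)'=r/q$ on that ball, then Lemma \ref{lem-max-bdd} (boundedness of $\cm$ on $L^p(\rn)$), gives
\begin{align}\label{pr-final}
\lf\|[b,D_\eta]f\r\|_q
=\lf\|\lf([b,D_\eta]f\r)\mathbf 1_{B(\mathbf 0,R+1)}\r\|_q
\ls \eta\,\|\nabla b\|_\fz\,|B(\mathbf 0,R+1)|^{1/r}\,\lf\|\cm(f)\r\|_p
\ls_{b}\ \eta\,\|f\|_p,
\end{align}
where the implicit constant depends on $b$ (through $R$ and $\|\nabla b\|_\fz$), on $n$, $p$, $q$, and on the Calder\'on--Zygmund constants, but not on $f$ or $\eta\in(0,1]$. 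This shows $[b,T]-[b,T_\eta]$ is bounded from $L^p(\rn)$ to $L^q(\rn)$ with $\|[b,T]-[b,T_\eta]\|_{p\to q}\ls_b\eta$, and letting $\eta\to0^+$ yields the claimed limit.

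The only genuine subtlety — and the step I would treat most carefully — is making the ``$p>q$'' mechanism explicit: unlike the case $p\le q$, the maximal operator $\cm$ is \emph{not} bounded from $L^p(\rn)$ to $L^q(\rn)$, so the passage from \eqref{pr-point} to \eqref{pr-final} cannot be done globally and genuinely requires the Hölder step on the fixed ball $B(\mathbf 0,R+1)$, which in turn is available precisely because $b$ has compact support. (This is exactly the phenomenon highlighted in the introduction.) A minor technical point to verify is that the truncated integral in \eqref{pr-point} is well defined and finite for $f\in L^p(\rn)$, which follows from Hölder's inequality since $|x-y|^{-(n-1)}\mathbf 1_{\{|x-y|<\eta\}}\in L^{p'}(\rn,dy)$ for every $x$; and that the operators $T$ and $T_\eta$ are both bounded on $L^p(\rn)$, so that the commutators involved make sense as $L^p$-to-$L^q$ maps to begin with — the latter for $[b,T]$ being part of the standing hypotheses and for $[b,T_\eta]$ following from \eqref{pr-final} together with the boundedness of $[b,T]$.
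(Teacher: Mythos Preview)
Your proof is correct and follows essentially the same approach as the paper: both exploit the compact support of $b$ together with the fact that the kernel of $T-T_\eta$ lives in $\{|x-y|<\eta\}$, derive the pointwise bound $\eta\,\|\nabla b\|_\fz$ times a maximal function via the dyadic-annulus computation, and then use H\"older's inequality to pass from $L^p$ to $L^q$. The only cosmetic difference is where the localization is placed: the paper restricts the \emph{input}, writing $\cm(|f|\mathbf 1_{B(\mathbf 0,R_0+\eta)})$ and then applying the $L^q$-boundedness of $\cm$ followed by H\"older inside the maximal function, whereas you restrict the \emph{output}, observing that $[b,D_\eta]f$ is supported in $B(\mathbf 0,R+\eta)$, applying H\"older there, and then invoking the $L^p$-boundedness of $\cm$; these are mirror images of the same mechanism and yield the same bound $\eta(R+\eta)^{n/r}\|f\|_p$ (your restriction to $\eta\le 1$ is only for notational convenience and the general $\eta$ case is immediate).
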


\begin{proof}
Let all the symbols be the same as in the present lemma
and let $\eta\in(0,\fz)$.
Then $\supp(b)\subset B(\mathbf{0},R_0)$ for some positive constant $R_0$,
and hence, for any  $y\in\rn\setminus B(\mathbf{0},R_0+\eta)$
and any $x\in\rn$ with $|x-y|\le\eta$, we have $b(x)=0=b(y)$ and hence
\begin{align}\label{bx-by=0}
b(x)-b(y)=0.
\end{align}
In what follows, without loss of generality, we may consider that
both $T$ and $T_\eta$ for any $\eta\in(0,\fz)$ are well defined on
any $f\in L^p(\rn)$;
otherwise, we need first to consider $f\in C_{\rm c}^\fz(\rn)$
and then to use a density argument.
Thus, by \eqref{Keta-def}, \eqref{phi-def}, \eqref{bx-by=0}, \eqref{size},
and the mean value theorem, we conclude that,
for any $\eta\in(0,\fz)$, $f\in L^p(\rn)$, and $x\in\rn$,
\begin{align*}
&\lf|[b,T](f)(x)- [b,T_\eta](f)(x)\r|\\
&\quad=\lf|\lim_{\varepsilon\to0^+}\int_{\varepsilon<|x-y|<1/\varepsilon}
[b(x)-b(y)]K(x,y)
\varphi\lf(\frac{|x-y|}{\eta}\r)f(y)\,dy \r|\\
&\quad\le\int_{\rn}|b(x)-b(y)|\lf|K(x,y)\r|\mathbf{1}_{\{y\in\rn:\ |x-y|\le\eta\}}(x,y)
\lf|f(y)\r|\mathbf{1}_{B(\mathbf{0},R_0+\eta)}(y)\,dy\\
&\quad\ls\int_{\rn}|b(x)-b(y)|
\frac{\mathbf{1}_{\{y\in\rn:\ |x-y|\le\eta\}}(x,y)}{|x-y|^n}
|f(y)|\mathbf{1}_{B(\mathbf{0},R_0+\eta)}(y)\,dy\\
&\quad\ls\|\nabla b\|_{\fz}
\sum_{k=0}^\fz\int_{\{y\in\rn:\ 2^{-(k+1)}\eta<|x-y|\le2^{-k}\eta\}}
\frac{|f(y)|\mathbf{1}_{B(\mathbf{0},R_0+\eta)}(y)}{|x-y|^{n-1}}\,dy\\
&\quad\ls\|\nabla b\|_{\fz}
\sum_{k=0}^\fz \frac{2^{-k}\eta}{[2^{-(k+1)}\eta]^n}
\int_{\{y\in\rn:\ |x-y|\le2^{-k}\eta\}}
|f(y)|\mathbf{1}_{B(\mathbf{0},R_0+\eta)}(y)\,dy\\
&\quad\ls\eta \|\nabla b\|_{\fz} \cm(|f|\mathbf{1}_{B(\mathbf{0},R_0+\eta)})(x),
\end{align*}
where $\|\nabla b\|_{\fz}$ denotes the essential supremum of $|\nabla b|$ on $\rn$
and $\cm$ the Hardy--Littlewood maximal operator.
From this, Lemma \ref{lem-max-bdd}, and the H\"older inequality,
it follows that
\begin{align*}
&\lf\|[b,T](f)- [b,T_\eta](f)\r\|_q\\
&\quad\ls \eta \|\nabla b\|_{\fz} \lf\|\cm(|f|\mathbf{1}_{B(\mathbf{0},R_0+\eta)}) \r\|_q
\ls \eta \|\nabla b\|_{\fz} \|\cm\|_{q\to q} \lf\||f|\mathbf{1}_{B(\mathbf{0},R_0+\eta)} \r\|_q\\
&\quad\ls \eta \|\nabla b\|_{\fz} \|\cm\|_{q\to q} \lf\|\mathbf{1}_{B(\mathbf{0},R_0+\eta)} \r\|_r
\|f\|_p\\
&\quad\sim \eta \|\nabla b\|_{\fz} \|\cm\|_{q\to q} \lf(R_0+\eta\r)^{n/r} \|f\|_p
\end{align*}
and hence $[b,T]-[b,T_\eta]$ is bounded from $L^p(\rn)$ to $L^q(\rn)$ and
$$\lim_{\eta\to0^+}\lf\|[b,T]-[b,T_\eta]\r\|_{p\to q}=0.$$
This finishes the proof of Lemma \ref{lem-Teta}.
\end{proof}

\begin{remark}\label{rem-approx}
\begin{enumerate}
\item[\rm(i)] Let all the symbols be the same as in Lemma \ref{lem-Teta}.
Then, by \cite[Theorem 1.0.1]{h21} and
$b\in C_{\rm c}^1(\rn) \subset L^r(\rn)$ with $\frac1r:=\frac1p-\frac1q$,
we know that $[b,T]$ is bounded from $L^p(\rn)$ to $L^q(\rn)$,
which, together with Lemma \ref{lem-Teta},
further implies that $[b,T_\eta]$ is bounded from $L^p(\rn)$ to $L^q(\rn)$
for any $\eta\in(0,\fz)$.

\item[\rm(ii)] Observe that the compact support of $b\in C_{\rm c}^1(\rn)$
plays a key role in the proof of Lemma \ref{lem-Teta}.
In contrast to this, the corresponding approximation from $L^p(\rn)$ to $L^q(\rn)$ with $p\le q$
only needs the smoothness of $b$, namely, $\|\nabla b\|_{L^\fz(\rn)}<\fz$;
see, for instance, \cite[Lemma 7]{cc13}, \cite[Lemma 3.1]{txyy21JFAA},
and \cite[Lemma 3.4]{tyy22MANN}.
\end{enumerate}
\end{remark}

Now, we state the first main theorem of this article.

\begin{theorem}\label{thm-cpt}
Let $1<q<p<\fz$, $b\in L^r(\rn)$ with $\frac1r:=\frac1q-\frac1p$,
and $T$ be a Calder\'on--Zygmund operator whose kernel
satisfies \eqref{size}, \eqref{regular}, and \eqref{Dini}.
Then the commutator $[b,T]$ is compact from $L^p(\rn)$ to $L^q(\rn)$.
\end{theorem}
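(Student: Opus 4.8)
The plan is to deduce Theorem \ref{thm-cpt} from the approximation in Lemma \ref{lem-Teta} together with the Fr\'echet--Kolmogorov criterion (Lemma \ref{lem-FK}). Since the set of compact operators from $L^p(\rn)$ to $L^q(\rn)$ is closed in the operator norm, and since $C_{\rm c}^1(\rn)$ (indeed $C_{\rm c}^\fz(\rn)$) is dense in $L^r(\rn)$ while $\|[b,T]-[\wt b,T]\|_{p\to q}=\|[b-\wt b,T]\|_{p\to q}\ls\|b-\wt b\|_r$ by \cite[Theorem 1.0.1]{h21}, it suffices to prove compactness of $[b,T_\eta]$ for a fixed $\eta\in(0,\fz)$ and $b\in C_{\rm c}^1(\rn)$: indeed, given a general $b\in L^r(\rn)$, choose $b_k\in C_{\rm c}^1(\rn)$ with $b_k\to b$ in $L^r(\rn)$, and for each $k$ choose $\eta_k$ (via Lemma \ref{lem-Teta}) so that $\|[b_k,T]-[b_k,T_{\eta_k}]\|_{p\to q}<1/k$; then $[b_k,T_{\eta_k}]\to[b,T]$ in operator norm, and $[b,T]$ is compact provided each $[b_k,T_{\eta_k}]$ is.

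To show $[b,T_\eta]$ is compact for $b\in C_{\rm c}^1(\rn)$, I would verify the three conditions of Lemma \ref{lem-FK} for the image set $\cf:=\{[b,T_\eta](f):\ \|f\|_p\le1\}$. Condition (i), boundedness in $L^q$, follows from Remark \ref{rem-approx}(i). For conditions (ii) and (iii), the decisive structural point is that the kernel of $[b,T_\eta]$, namely $[b(x)-b(y)]K_\eta(x,y)$, is supported (in $x$) within $B(\mathbf 0,R_0)$ since $b(x)-b(y)$ vanishes when $x\notin\supp b$ and $\dots$ wait --- it is actually supported where $x\in\supp b$ \emph{or} $y\in\supp b$; but the crucial gain from the \emph{smooth truncation} is that $K_\eta(x,y)=0$ whenever $|x-y|\le\eta/2$, so when $x\in B(\mathbf 0,R_0)$ the variable $y$ ranges over $|x-y|>\eta/2$, and combined with $b(y)=0$ for $|y|>R_0$ we still only integrate $f$ against an $L^\infty$-in-$x$, uniformly-in-$x$ integrable (in $y$) kernel. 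More to the point: writing $[b,T_\eta](f)(x)=\int[b(x)-b(y)]K_\eta(x,y)f(y)\,dy$, for $x\notin B(\mathbf 0,R_0)$ we have $b(x)=0$, so $[b,T_\eta](f)(x)=-\int_{\supp b} b(y)K_\eta(x,y)f(y)\,dy$; for $|x|\ge M$ with $M$ large, $|x-y|\gtrsim|x|$ for $y\in\supp b$, whence by \eqref{size} $|[b,T_\eta](f)(x)|\ls|x|^{-n}\|b\|_{\fz}|x|^{?}$... I would instead bound $|[b,T_\eta](f)(x)|\ls\|b\|_\fz\int_{\supp b}|x-y|^{-n}|f(y)|\,dy\ls\|b\|_\fz|x|^{-n}\|f\|_p|B(\mathbf 0,R_0)|^{1/p'}$ for $|x|\ge 2R_0$, which is in $L^q(\{|x|\ge M\})$ with norm $\to0$ as $M\to\fz$ since $|x|^{-n}\in L^q$ near infinity (as $nq>n$, i.e.\ $q>1$); this gives (ii) uniformly in $\|f\|_p\le1$.

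For the equicontinuity condition (iii), I would estimate $[b,T_\eta](f)(x+\xi)-[b,T_\eta](f)(x)$ by splitting the difference of kernels into the ``$b$-part'' and the ``$K_\eta$-part'': namely $[b(x+\xi)-b(x)]\int K_\eta(x+\xi,y)f(y)\,dy$ plus $\int[b(x)-b(y)][K_\eta(x+\xi,y)-K_\eta(x,y)]f(y)\,dy$. The first term is controlled by $\|\nabla b\|_\fz|\xi|\cdot T^\ast(f)(x+\xi)$ type bound (using that $T_\eta$ is a nice truncated operator, dominated pointwise by $\cm(f)$ or $T^\ast(f)$ up to smooth-truncation corrections handled as in Lemma \ref{lem-Teta}), hence is $O(|\xi|)$ in $L^q$ via Lemma \ref{lem-max-bdd} and H\"older over the compact set $B(\mathbf 0,R_0+1)$. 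The second term uses the regularity \eqref{regular} of $K$ (and the smoothness of $\varphi$, both of which transfer to $K_\eta$ since $\eta$ is fixed), giving a factor $\oz(|\xi|/|x-y|)+C|\xi|/\eta$ that, after the standard annular decomposition in $|x-y|$ and the Dini condition \eqref{Dini}, produces a bound $\ls[\oz\ast\text{-type modulus}](|\xi|)\cdot\cm(|b f|\mathbf 1_{\supp b})(x)$ plus lower-order terms, again integrable in $L^q$ over the compact set where $x$ lives, with modulus tending to $0$ as $\xi\to\mathbf 0$, uniformly in $\|f\|_p\le1$. Collecting, $\cf$ satisfies (i)--(iii) and is relatively compact.

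The main obstacle I anticipate is \emph{not} any single estimate but the bookkeeping of the compact-support reductions: one must be careful that after truncation the relevant integrals of $f$ always occur over a \emph{fixed bounded region} (depending on $R_0$ and $\eta$ but not on $f$), so that H\"older's inequality converts $\|f\|_p$ into $\|f\|_p$ times a \emph{finite} geometric factor --- exactly the point emphasized in the abstract, that the compact support is ``redundant when $p\le q$ but crucial when $p>q$,'' because neither $\cm$ nor $T^\ast$ maps $L^p$ to $L^q$. A secondary technical nuisance is justifying the pointwise domination of the truncated operators and their translates by $\cm$ or $T^\ast$ with constants uniform in the small parameters; this is where I would reuse verbatim the annular-sum computation already carried out in the proof of Lemma \ref{lem-Teta}.
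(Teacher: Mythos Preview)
Your approach is correct and follows essentially the same route as the paper: reduce to $b\in C_{\rm c}^\fz(\rn)$ and the smoothly truncated operator $T_\eta$ via density and Lemma~\ref{lem-Teta}, then verify the three Fr\'echet--Kolmogorov conditions using the compact support of $b$ to localize the $f$-integrals. The paper's only organizational differences are that for condition~(iii) it handles your ``$b$-part'' by H\"older directly as $\|b(\cdot)-b(\cdot+\xi)\|_r\,\|\cm(f)+T^\ast(f)\|_p$ (no localization needed there), and it splits your ``$K_\eta$-part'' further into the region $y\in B(\mathbf 0,R_0)$ (bounded via $\|b\|_\fz$ and $\cm(f\mathbf 1_{B(\mathbf 0,R_0)})$ in $L^q$) and $y\notin B(\mathbf 0,R_0)$ (bounded via $\|b\|_r$ and $\cm(f)$ in $L^p$), which cleans up the slight imprecision in your sketch about ``the compact set where $x$ lives.''
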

\begin{proof}
Let all the symbols be the same as in the present theorem.
To show the desired compactness,
by the density of $C_{\rm c}^\fz(\rn)$ in $L^r(\rn)$,
Remark \ref{rem-approx}(i), Lemma \ref{lem-Teta},
and \cite[p.\,278, Theorem(iii)]{y95},
it suffices to prove that,
for any given $b\in C_{\rm c}^\fz(\rn)$ and any $\eta\in(0,\fz)$ small enough,
$[b,T_\eta]$ is compact from $L^p(\rn)$ to $L^q(\rn)$.
To this end, for any bounded subset $\cf\subset L^p(\rn)$, we show that
the set $[b,T]\cf:=\{[b,T](f):\ f\in\cf\}$ satisfies (i), (ii),
and (iii) of Lemma \ref{lem-FK}, and we proceed in order.

First, using both Lemma \ref{lem-Teta} and Remark \ref{rem-approx}(i),
we find that $[b,T_\eta]\mathcal{F}$ satisfies the condition (i) of Lemma \ref{lem-FK}.

Next, from $b\in C_{\rm c}^\fz(\rn)$, it follows that
$\supp (b)\subset B(\mathbf{0},R_0)$ for some positive constant $R_0$.
Let $M\in(2R_0,\infty)$. Then,
for any $y\in B(\mathbf{0},R_0)$ and $x\in\rn$ with $|x|\in(M,\infty)$,
we have $|x-y|\sim |x|$.
Moreover, by this, \eqref{Keta-def}, \eqref{size}, and the H\"older inequality,
we conclude that, for any $f\in\cf$ and
$x\in\rn$ with $|x|\in(M,\infty)$,
\begin{align*}
\lf|[b,\,T_\eta](f)(x)\r|
&\ls\int_\rn
\frac{|b(x)-b(y)|}{|x-y|^n}|f(y)|\,dy
\ls\|b\|_\fz\int_{B(\mathbf{0},R_0)}
\frac{|f(y)|}{|x|^n}\,dy\\
&\ls\frac{\|b\|_\fz}{|x|^n}\|f\|_{p}\lf\|{\mathbf 1}_{B(\mathbf{0},R_0)}\r\|_{p'}
\ls\frac{\|b\|_\fz\|f\|_{p}R_0^{n/p'}}{|x|^n}
\end{align*}
and hence
\begin{align*}
&\lf\|[b,\,T_\eta](f){\mathbf 1}_{\{x\in\rn:\ |x|>M\}}\r\|_{q}\noz\\
&\quad\ls \|b\|_\fz\|f\|_{p}R_0^{n/p'}
\sum_{j=0}^\infty
\lf\|\frac1{|\cdot|^n} {\mathbf 1}_{\{x\in\rn:\ 2^jM<|x|\leq 2^{j+1}M\}}\r\|_{q}\noz\\
&\quad\ls \|b\|_\fz\|f\|_{p}R_0^{n/p'}
\sum_{j=0}^\infty
\frac{\|{\mathbf 1}_{\{x\in\rn:\ |x|\leq 2^{j+1}M\}}\|_{q}}{(2^jM)^{n}}\noz\\
&\quad\ls \|b\|_\fz\|f\|_{p}R_0^{n/p'}
\sum_{j=0}^\infty
\frac{(2^{j+1}M)^{n/q}}{(2^jM)^{n}}
\sim\frac{\|b\|_\fz\|f\|_{p}R_0^{n/p'}}{M^{n/q'}}.
\end{align*}
Therefore, the condition (ii) of Lemma \ref{lem-FK} holds true for
$[b,T_\eta]\mathcal{F}$.

It remains to prove that $[b,T_\Omega^{(\eta)}]\mathcal{F}$
also satisfies the condition (iii) of Lemma \ref{lem-FK}.
For any $f\in\cf$,
$\xi\in\rn\setminus\{\mathbf{0}\}$,
and $x\in\rn$, we have
\begin{align}\label{L1+L2}
&[b, T_\eta](f)(x)-[b, T_\eta](f)(x+\xi)\\
&\quad=\int_{\rn}[b(x)-b(y)] K_\eta(x,y) f(y)\,dy\noz\\
&\qquad-\int_{\rn}[b(x+\xi)-b(y)] K_\eta(x+\xi,y) f(y)\,dy\notag\\
&\quad=[b(x)-b(x+\xi)]\int_{\rn} K_\eta(x,y) f(y)\,dy\notag\\
&\qquad+\int_{\rn}[b(x+\xi)-b(y)] \lf[K_\eta(x,y)-K_\eta(x+\xi,y)\r] f(y)\,dy\notag\\
&\quad=[b(x)-b(x+\xi)]\int_{\rn} K_\eta(x,y) f(y)\,dy\notag\\
&\qquad+\int_{B(\mathbf{0}, R_0)}[b(x+\xi)-b(y)]
\lf[K_\eta(x,y)-K_\eta(x+\xi,y)\r] f(y)\,dy\notag\\
&\qquad+b(x+\xi)\int_{\rn\setminus B(\mathbf{0}, R_0)}
\lf[K_\eta(x,y)-K_\eta(x+\xi,y)\r] f(y)\,dy\notag\\
&\quad=:L_1(x)+L_2(x)+L_3(x).\noz
\end{align}
We first estimate $L_1$. For any $x\in\rn$,
using \eqref{size}, we obtain
\begin{align*}
\lf|\int_\rn K_\eta(x,y)f(y)\,dy\r|
&\le \lf|\int_{\{y\in\rn:\ |x-y|>\eta/2\}} \lf[K_\eta(x,y)-K(x,y)\r]f(y)\,dy\r|\\
&\qquad+\lf|\int_{\{y\in\rn:\ |x-y|>\eta/2\}} K(x,y)f(y)\,dy\r|\\
&  \ls\int_{\{y\in\rn:\ \eta/2<|x-y|\le\eta\}} \frac{|f(y)|}{|x-y|^n}\,dy+T^\ast(f)(x)\\
& \ls \cm(f)(x)+T^\ast(f)(x).
\end{align*}
From this, the H\"older inequality, and Lemma \ref{lem-max-bdd},
it follows that
\begin{align*}
\|L_1\|_q&\le\|b(\cdot)-b(\cdot-\xi)\|_r\lf\|\int_\rn K_\eta(\cdot,y)f(y)\r\|_p\\
&\ls\|b(\cdot)-b(\cdot-\xi)\|_r\lf(\lf\|\cm\r\|_{p\to p}+\lf\|T^\ast\r\|_{p\to p}\r)\|f\|_p.
\end{align*}
By this, the observation that $b$ is uniformly continuous with compact support
[or from the continuity of translations on $L^r(\rn)$],
and Lemma \ref{lem-max-bdd},
we obtain
\begin{align}\label{L1}
\lim_{\xi\to\mathbf{0}}\|L_1\|_q=0.
\end{align}

Now, for any $x,y,\xi\in\rn$ with
$|x-y|<\eta/4$ and $|\xi|<\eta/8$, we have
$|x-y|/\eta<1/2$ and $|x+\xi-y|/\eta<1/2$, which implies that
$\varphi(|x-y|/\eta)=1=\varphi(|x+\xi-y|/\eta)$
and hence
\begin{align}\label{K=0=K}
K_\eta(x,y)=0=K_\eta(x+\xi,y).
\end{align}
Moreover, for any $x,y,\xi\in\rn$ with $|x-y|\ge\eta/4$ and $|\xi|<\eta/8$,
we have $|\xi|\le|x-y|/2$ which, together with \eqref{regular} and \eqref{size},
further implies that
\begin{align}\label{K-K}
&\lf|K_\eta(x,y)-K_\eta(x+\xi,y)\r|\\
&\quad=\lf|K(x,y)\lf[1-\varphi\lf(\frac{|x-y|}{\eta}\r)\r]
-K_\eta(x+\xi,y)\lf[1-\varphi\lf(\frac{|x+\xi-y|}{\eta}\r)\r]\r|\noz\\
&\quad\le\lf|K(x,y)-K(x+\xi,y)\r| \lf|1-\varphi\lf(\frac{|x-y|}{\eta}\r) \r|\noz\\
&\qquad+\lf|K(x+\xi,y)\r|\lf|\varphi\lf(\frac{|x-y|}{\eta}\r)
-\varphi\lf(\frac{|x+\xi-y|}{\eta}\r) \r|\noz\\
&\quad\ls\frac{1}{|x-y|^n}\oz\lf(\frac{|\xi|}{|x-y|}\r)  \noz\\
&\qquad+\frac{\|\varphi'\|_{\fz}}{|x+\xi-y|^n}
\lf|\frac{|x+\xi-y|}{\eta}-\frac{|x-y|}{\eta} \r|
\mathbf1_{\{(x,y)\in\rn\times\rn:\ \frac13\eta\le|x-y|\le2\eta\}}(x,y)\noz\\
&\quad\ls\frac{1}{|x-y|^n}\oz\lf(\frac{|\xi|}{|x-y|}\r)
+\frac{1}{|x-y|^n}\frac{|\xi|}{\eta}
\mathbf1_{\{(x,y)\in\rn\times\rn:\ \frac13\eta\le|x-y|\le2\eta\}}(x,y)\noz\\
&\quad\sim\frac{1}{|x-y|^n}\lf[\oz\lf(\frac{|\xi|}{|x-y|}\r)+\frac{|\xi|}{|x-y|}\r].\noz
\end{align}

By both \eqref{K=0=K} and \eqref{K-K},
we conclude that, for any $x\in\rn$,
\begin{align*}
&\int_{\rn}\lf|K_\eta(x,y)-K_\eta(x+\xi,y)\r| |f(y)|\,dy\\
&\quad\ls
|\xi|\int_\rn\frac{|f(y)|}{|x-y|^{n+1}}\mathbf 1_{\{(x,y)\in\rn\times\rn:\ |x-y|\geq\eta/4\}}\,dy \\
&\qquad+\int_\rn \frac{1}{|x-y|^{n}}\oz\lf(\frac{|\xi|}{|x-y|}\r)|f(y)|
\mathbf 1_{\{(x,y)\in\rn\times\rn:\ |x-y|\geq\eta/4\}}\,dy\\
&\quad\ls|\xi|
\sum_{k=0}^\fz\lf(2^k\eta\r)^{-(n+1)} \int_{\{y\in\rn:\ 2^k\frac\eta4\le|x-y|<2^{k+1}\frac\eta4\}}
|f(y)|\,dy\\
&\qquad+
\sum_{k=0}^\fz\lf(2^k\eta\r)^{-n}
\omega\lf(\frac{|\xi|}{2^{k-2}\eta}\r)
\int_{\{y\in\rn:\ 2^k\frac\eta4\le|x-y|<2^{k+1}\frac\eta4\}}
|f(y)|\,dy\\
&\quad\ls\lf[\frac{|\xi|}{\eta}+\int_0^{8|\xi|/\eta}\frac{\oz(s)}{s}\,ds\r]\cm(f)(x).
\end{align*}
From this, the H\"older inequality, and Lemma \ref{lem-max-bdd},
we deduce that
\begin{align}\label{L3}
&\|L_2\|_q+\|L_3\|_q\\
&\quad\le 2\|b\|_\infty\lf\|\int_{\rn}
\lf|K_\eta(\cdot,y)-K_\eta(\cdot+\xi,y)\r| |f(y)|\mathbf 1_{B(\mathbf 0, R_0)}\,dy\r\|_q\noz\\
&\qquad +\|b\|_r\lf\|\int_{\rn}
\lf|K_\eta(\cdot,y)-K_\eta(\cdot+\xi,y)\r| |f(y)|\,dy\r\|_p\noz\\
&\quad\ls
\lf[\frac{|\xi|}{\eta}+\int_0^{8|\xi|/\eta}\frac{\oz(s)}{s}\,ds\r]
\lf[\|b\|_\infty\|\cm(f\mathbf 1_{B(\mathbf 0, R_0)})\|_q+\|b\|_r\|\cm(f)\|_p\r]\noz\\
&\quad\ls
\lf[\frac{|\xi|}{\eta}+\int_0^{8|\xi|/\eta}\frac{\oz(s)}{s}\,ds\r]
\lf[\|b\|_\infty \|\cm\|_{q\to q}R_0^{n/r} +\|b\|_r\|\cm\|_{p\to p}\r]
\|f\|_p.\noz
\end{align}

Combining \eqref{L1+L2}, \eqref{L1}, \eqref{L3}, \eqref{Dini},
and Lemma \ref{lem-max-bdd},
we conclude that
$$\lim_{\xi\to\mathbf{0}}
\lf\|[b, T_\eta](f)(\cdot+\xi)-[b, T_\eta](f)(\cdot) \r\|_q=0$$
uniformly for any $f\in \cf$,
which implies the condition (iii) of Lemma \ref{lem-FK}.
Thus, $[b,\,T_\eta]$ is a compact operator
for any given $b\in C_{\rm c}^\fz(\rn)$ and $\eta\in(0,\fz)$.
This then finishes the  proof of Theorem \ref{thm-cpt}.
\end{proof}

\begin{remark}
One can avoid to use the compact support of $b$
in \eqref{L3} when $p=q$; see, for instance, \cite{tyyz21PA}.
\end{remark}

Now, we consider a Calder\'on--Zygmund operator $T_\Omega$
with rough homogeneous kernel $\Omega$.
Such type of operators and their commutators have attracted
a lot of attention; see, for instance, \cite{cl22,cg21,cw18}.
Precisely, let $\Omega\in L^1(\mathbb{S}^{n-1})$
be homogeneous of degree zero and have mean value zero,
namely, for any $\mu\in(0,\infty)$ and $x\in\mathbb{S}^{n-1}$,
\begin{equation}\label{homo}
\Omega(\mu x):=\Omega(x)\quad{\rm and}\quad
\int_{\mathbb{S}^{n-1}}\Omega(x)\,d\sigma(x)=0;
\end{equation}
here and hereafter,
$\mathbb{S}^{n-1}:=\{x\in\rn:\ |x|=1\}$ denotes the \emph{unit sphere} of $\rn$
and $d\sigma$ the area measure on $\mathbb{S}^{n-1}$.
Then, for any suitable function $f$ and any $x\in\rn$,
\begin{align*}
T_\Omega(f)(x):=&\,\text{\,p.\,v.}\,\int_{\rn}\frac{\Omega(x-y)}{|x-y|^n}f(y)\,dy\\
:=&\,\lim_{\varepsilon\to0^+}\int_{\varepsilon<|x-y|<1/\varepsilon}
\frac{\Omega(x-y)}{|x-y|^n}f(y)\,dy.
\end{align*}
It is well known that $T_\Omega$ is bounded on $L^p(\rn)$
when $\Omega \in L^{v}(\mathbb{S}^{n-1})$ for some $v\in(1,\fz]$
and, moreover, for any $f\in L^p(\rn)$,
\begin{align}\label{bdd-Toz}
\|T_\Omega\|_p\ls \|\Omega\|_{L^{v}(\mathbb{S}^{n-1})}\|f\|_p
\end{align}
with the implicit positive constant depending only on both $n$ and $v$;
see, for instance, \cite[p.\,79, Theorem 4.2]{d01}.
In particular, if $\Omega\in{\rm Lip\,}(\mathbb{S}^{n-1})$, namely,
$\Omega$ satisfies the \emph{Lipschitz condition}
$$
|\Omega(x)-\Omega(y)|\ls|x-y|\quad {\rm for\ any\ } x,y\in\mathbb{S}^{n-1}
$$
with the implicit positive constant independent of both $x$ and $y$,
then the kernel $K(x,y):=\frac{\Omega(x-y)}{|x-y|^n}$
for any $x,y\in\rn$ with $x\neq y$ satisfies \eqref{size},
\eqref{regular}, and \eqref{Dini},
which implies the following corollary.
\begin{corollary}\label{cpt-Lip}
Let $\Omega\in{\rm Lip\,}(\mathbb{S}^{n-1})$ satisfy \eqref{homo}.
Then Theorem \ref{thm-cpt} holds true for $T_\Omega$.
\end{corollary}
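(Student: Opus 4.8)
The plan is to recognize that $T_\Omega$ is a special case of the operators $T$ treated in Theorem \ref{thm-cpt}, so that the corollary follows at once once three facts are in place: the size bound \eqref{size} for $K(x,y):=\Omega(x-y)/|x-y|^n$, the smoothness bound \eqref{regular} with a modulus $\oz$ satisfying the Dini condition \eqref{Dini}, and the boundedness of $T_\Omega$ on $L^p(\rn)$ for every $p\in(1,\fz)$. The first and third are immediate: since $\mathbb{S}^{n-1}$ is compact, every $\Omega\in\mathrm{Lip}(\mathbb{S}^{n-1})$ is bounded, so $|K(x,y)|\le\|\Omega\|_{L^\fz(\mathbb{S}^{n-1})}/|x-y|^n$, which is \eqref{size}; and $\Omega\in L^\fz(\mathbb{S}^{n-1})\subset L^v(\mathbb{S}^{n-1})$ for every $v\in(1,\fz]$ because $\sigma(\mathbb{S}^{n-1})<\fz$, so \eqref{bdd-Toz} furnishes the required $L^p$-boundedness for all $p\in(1,\fz)$, which is precisely the standing assumption on the Calder\'on--Zygmund operator in Theorem \ref{thm-cpt}.

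For \eqref{regular}, I would fix $x,\widetilde{x},y\in\rn$ with $|x-\widetilde{x}|<\frac12|x-y|$, set $u:=x-y$ and $v:=\widetilde{x}-y$, and note that $|u-v|=|x-\widetilde{x}|<\frac12|u|$ forces $|v|>\frac12|u|$, hence $|u|\sim|v|$. I would then decompose
$$
K(x,y)-K(\widetilde{x},y)=\frac{\Omega(u/|u|)-\Omega(v/|v|)}{|u|^n}+\Omega\lf(\frac{v}{|v|}\r)\lf(\frac1{|u|^n}-\frac1{|v|^n}\r),
$$
bounding the first summand by the Lipschitz property of $\Omega$ together with the elementary inequality $|u/|u|-v/|v||\le 2|u-v|/\max\{|u|,|v|\}\le 2|u-v|/|u|$, and the second summand by $|1/|u|^n-1/|v|^n|\lesssim|u-v|/|u|^{n+1}$ (a consequence of $|u|\sim|v|$) together with $|\Omega(v/|v|)|\le\|\Omega\|_{L^\fz(\mathbb{S}^{n-1})}$. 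Both contributions are $\lesssim|x-y|^{-n}\,|x-\widetilde{x}|/|x-y|$; the companion term $|K(y,x)-K(y,\widetilde{x})|$ is estimated identically, since $K(y,x)=\Omega(y-x)/|y-x|^n$ has the same form with $u=y-x$, $v=y-\widetilde{x}$ and $|u-v|=|x-\widetilde{x}|$. Consequently \eqref{regular} holds with $\oz(t):=Ct$ for a suitable constant $C$ or, absorbing $C$ into $C_{(K)}$, with $\oz(t)=t$, which obviously satisfies \eqref{Dini} (indeed, as already noted, $\oz(t)=t^\az$ does for every $\az\in(0,1]$).

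With \eqref{size}, \eqref{regular}, \eqref{Dini}, and the $L^p$-boundedness all verified, Theorem \ref{thm-cpt} applies verbatim to $T_\Omega$ and yields that $[b,T_\Omega]$ is compact from $L^p(\rn)$ to $L^q(\rn)$ whenever $1<q<p<\fz$ and $b\in L^r(\rn)$ with $\frac1r=\frac1q-\frac1p$. The only genuinely computational point is the kernel-regularity estimate of the second paragraph, and even that reduces to the standard bound $|u/|u|-v/|v||\lesssim|u-v|/|u|$ for unit vectors; everything else is a direct translation of the hypotheses of Theorem \ref{thm-cpt}.
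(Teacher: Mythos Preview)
Your proposal is correct and follows exactly the approach indicated in the paper, which simply notes (in the sentence immediately preceding the corollary) that for $\Omega\in\mathrm{Lip}(\mathbb{S}^{n-1})$ the kernel $K(x,y)=\Omega(x-y)/|x-y|^n$ satisfies \eqref{size}, \eqref{regular}, and \eqref{Dini}, so that Theorem \ref{thm-cpt} applies directly. You have merely supplied the routine computational details of this verification, which the paper omits.
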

For a rough $\Omega$,
we approximate it via ${\rm Lip\,}(\mathbb{S}^{n-1})$
and hence obtain the following approximation on $[b,T_\Omega]$.
\begin{lemma}\label{lem-Toz}
Let $1<q<p<\fz$, $b\in L^r(\rn)$ with $\frac1r:=\frac1q-\frac1p$,
and $T_\Omega$ be a Calder\'on--Zygmund operator with $\Omega$
satisfying \eqref{homo} and $\Omega \in L^{v}(\mathbb{S}^{n-1})$ for some $v\in(1,\fz)$.
Then, for any $f\in L^p(\rn)$,
$$\lf\|[b,T_\Omega](f) \r\|_q
\le\|b\|_r \lf\|\Omega\r\|_{L^v(\mathbb{S}^{n-1})} \|f\|_p$$
and hence
$$\inf_{\widetilde{\Omega}\in{\rm Lip\,}(\mathbb{S}^{n-1})}
\lf\|[b,T_{\Omega}]-[b,T_{\widetilde{\Omega}}]\r\|_{p\to q}=0.$$
\end{lemma}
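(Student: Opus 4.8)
The plan is to read off the $L^p(\rn)\to L^q(\rn)$ bound for $[b,T_\Omega]$ from the $L^v$-quantitative $L^p$-boundedness \eqref{bdd-Toz} by the trivial splitting of the commutator into $b\,T_\Omega(f)$ and $T_\Omega(bf)$, and then to deduce the vanishing of the infimum from the linearity of the map $\Omega\mapsto T_\Omega$ together with the density of Lipschitz functions in $L^v(\mathbb{S}^{n-1})$.

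First I would prove the norm estimate. Since $\frac1q=\frac1r+\frac1p$, the H\"older inequality gives $\|bf\|_q\le\|b\|_r\|f\|_p<\fz$, so $bf\in L^q(\rn)$ and, because \eqref{bdd-Toz} holds for every exponent in $(1,\fz)$, both $T_\Omega(f)\in L^p(\rn)$ and $T_\Omega(bf)\in L^q(\rn)$ are well defined. By the definition of the commutator, $[b,T_\Omega](f)=b\,T_\Omega(f)-T_\Omega(bf)$ almost everywhere on $\rn$. For the first summand, the H\"older inequality and the boundedness of $T_\Omega$ on $L^p(\rn)$ yield
\[
\|b\,T_\Omega(f)\|_q\le\|b\|_r\|T_\Omega(f)\|_p\ls\|\Omega\|_{L^v(\mathbb{S}^{n-1})}\|b\|_r\|f\|_p;
\]
for the second summand, the boundedness of $T_\Omega$ on $L^q(\rn)$ followed by the H\"older inequality yield
\[
\|T_\Omega(bf)\|_q\ls\|\Omega\|_{L^v(\mathbb{S}^{n-1})}\|bf\|_q\le\|\Omega\|_{L^v(\mathbb{S}^{n-1})}\|b\|_r\|f\|_p.
\]
Adding the two bounds gives $\|[b,T_\Omega](f)\|_q\ls\|b\|_r\|\Omega\|_{L^v(\mathbb{S}^{n-1})}\|f\|_p$ with an implicit constant depending only on $n$, $v$, $p$, and $q$, which is the asserted inequality.

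Next I would treat the infimum. Fix $\ve\in(0,\fz)$. As $v\in(1,\fz)$, the Lipschitz functions are dense in $L^v(\mathbb{S}^{n-1})$, so I may pick $\phi\in{\rm Lip\,}(\mathbb{S}^{n-1})$ with $\|\Omega-\phi\|_{L^v(\mathbb{S}^{n-1})}<\ve$. Since $\Omega$ has mean value zero by \eqref{homo}, the H\"older inequality on the sphere bounds the average $\frac1{|\mathbb{S}^{n-1}|}\int_{\mathbb{S}^{n-1}}\phi\,d\sigma=\frac1{|\mathbb{S}^{n-1}|}\int_{\mathbb{S}^{n-1}}(\phi-\Omega)\,d\sigma$ in absolute value by a constant times $\ve$; hence the re-centered function $\widetilde\Omega:=\phi-\frac1{|\mathbb{S}^{n-1}|}\int_{\mathbb{S}^{n-1}}\phi\,d\sigma$ lies in ${\rm Lip\,}(\mathbb{S}^{n-1})$, satisfies \eqref{homo}, and obeys $\|\Omega-\widetilde\Omega\|_{L^v(\mathbb{S}^{n-1})}\ls\ve$. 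Because $\Omega\mapsto T_\Omega$ is linear, $[b,T_\Omega]-[b,T_{\widetilde\Omega}]=[b,T_{\Omega-\widetilde\Omega}]$, and $\Omega-\widetilde\Omega$ again satisfies \eqref{homo} and belongs to $L^v(\mathbb{S}^{n-1})$; so the estimate just proved applies to it and gives
\[
\lf\|[b,T_\Omega]-[b,T_{\widetilde\Omega}]\r\|_{p\to q}
=\lf\|[b,T_{\Omega-\widetilde\Omega}]\r\|_{p\to q}
\ls\|b\|_r\,\|\Omega-\widetilde\Omega\|_{L^v(\mathbb{S}^{n-1})}\ls\ve.
\]
Letting $\ve\to0^+$ yields $\inf_{\widetilde\Omega\in{\rm Lip\,}(\mathbb{S}^{n-1})}\|[b,T_\Omega]-[b,T_{\widetilde\Omega}]\|_{p\to q}=0$.

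The only delicate point I anticipate is the re-centering in the last paragraph: the norm estimate can be applied to the \emph{difference} $\Omega-\widetilde\Omega$ only if this difference again has mean value zero, which is exactly why one must approximate $\Omega$ by $\widetilde\Omega$ rather than by $\phi$ itself (this also guarantees that $T_{\widetilde\Omega}$ is a genuine homogeneous Calder\'on--Zygmund operator and that $[b,T_{\widetilde\Omega}]=[b,T_\Omega]-[b,T_{\Omega-\widetilde\Omega}]$ is bounded from $L^p(\rn)$ to $L^q(\rn)$). The hypothesis $v<\fz$ is likewise essential, since Lipschitz functions are not dense in $L^\fz(\mathbb{S}^{n-1})$.
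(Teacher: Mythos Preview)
Your proof is correct and follows essentially the same approach as the paper: split the commutator as $bT_\Omega(f)-T_\Omega(bf)$, apply H\"older with $\frac1q=\frac1r+\frac1p$ together with the $L^v$-quantitative $L^p$- and $L^q$-boundedness \eqref{bdd-Toz}, and then use linearity in $\Omega$ plus density of Lipschitz functions in $L^v(\mathbb{S}^{n-1})$. Your explicit re-centering of the approximant to preserve the mean-zero condition \eqref{homo} is a point the paper leaves implicit, so your write-up is if anything slightly more careful.
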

\begin{proof}
Let all the symbols be the same as in the present lemma.
Then, by the H\"older inequality and \eqref{bdd-Toz},
we conclude that
\begin{align*}
\lf\|[b,T_\Omega](f) \r\|_q
&\le\lf\|b T_\Omega(f)\r\|_q+\lf\|T_\Omega(bf)\r\|_r
\ls \|b\|_r \lf\|T_\Omega(f)\r\|_p
 +\lf\|\Omega\r\|_{L^v(\mathbb{S}^{n-1})} \|bf\|_q\\
&\ls \|b\|_r \lf\|\Omega\r\|_{L^v(\mathbb{S}^{n-1})} \|f\|_p.
\end{align*}
Thus, for any $\widetilde{\Omega}\in{\rm Lip\,}(\mathbb{S}^{n-1})$,
\begin{align*}
\lf\|[b,T_{\Omega}]-[b,T_{\widetilde{\Omega}}]\r\|_{p\to q}
\ls \|b\|_r \lf\|\Omega-\widetilde{\Omega}\r\|_{L^v(\mathbb{S}^{n-1})},
\end{align*}
which, together with the density of ${\rm Lip\,}(\mathbb{S}^{n-1})$
in $L^v(\mathbb{S}^{n-1})$,
then completes the proof of Lemma \ref{lem-Toz}.
\end{proof}

The second main theorem of this article concerns
the $L^p$-to-$L^q$ compactness for commutators of rough homogeneous
Calder\'on--Zygmund operators.

\begin{theorem}\label{thm=cpt-Toz}
Let $1<q<p<\fz$, $b\in L^r(\rn)$ with $\frac1r=\frac1q-\frac1p$,
and $T_\Omega$ be a Calder\'on--Zygmund operator with rough
homogeneous $\Omega$ satisfying \eqref{homo} and
$\Omega \in L^{v}(\mathbb{S}^{n-1})$ for some $v\in(1,\fz]$.
Then the commutator $[b,T_{\Omega}]$ is compact from $L^p(\rn)$ to $L^q(\rn)$.
\end{theorem}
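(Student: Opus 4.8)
The plan is to reduce the rough case $\Omega\in L^v(\mathbb{S}^{n-1})$ to the already-established Lipschitz case, exactly as the structure of the preceding lemmas suggests. By Lemma \ref{lem-Toz}, the operator $[b,T_\Omega]$ can be approximated in the $L^p\to L^q$ operator norm by commutators $[b,T_{\widetilde\Omega}]$ with $\widetilde\Omega\in{\rm Lip\,}(\mathbb{S}^{n-1})$, and these $\widetilde\Omega$ can be taken to still satisfy the cancellation condition in \eqref{homo} (replacing $\widetilde\Omega$ by $\widetilde\Omega-\langle\widetilde\Omega\rangle_{\mathbb{S}^{n-1}}$ costs only an $L^v$-small amount). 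By Corollary \ref{cpt-Lip}, each $[b,T_{\widetilde\Omega}]$ is compact from $L^p(\rn)$ to $L^q(\rn)$. Since the space of compact operators from $L^p(\rn)$ to $L^q(\rn)$ is closed in the operator norm (see \cite[p.\,278, Theorem]{y95}), the operator-norm limit $[b,T_\Omega]$ is itself compact. This is the entire argument when $v\in(1,\fz)$.

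For the endpoint $v=\fz$, the first step needs a minor adjustment because the density of ${\rm Lip\,}(\mathbb{S}^{n-1})$ in $L^\fz(\mathbb{S}^{n-1})$ fails. Here I would simply observe that $L^\fz(\mathbb{S}^{n-1})\subset L^{v_0}(\mathbb{S}^{n-1})$ for any fixed $v_0\in(1,\fz)$, with $\|\Omega\|_{L^{v_0}(\mathbb{S}^{n-1})}\ls\|\Omega\|_{L^\fz(\mathbb{S}^{n-1})}$; then apply the $v=v_0$ case of Lemma \ref{lem-Toz} to produce Lipschitz approximants $\widetilde\Omega$ with $\|\Omega-\widetilde\Omega\|_{L^{v_0}(\mathbb{S}^{n-1})}\to0$, and conclude $\|[b,T_\Omega]-[b,T_{\widetilde\Omega}]\|_{p\to q}\to0$ as before. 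Everything else goes through verbatim.

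I do not anticipate a genuine obstacle here: the theorem is designed as the capstone that combines Lemma \ref{lem-Toz} (the rough-to-Lipschitz approximation) with Corollary \ref{cpt-Lip} (compactness in the Lipschitz case), and the only thing to be careful about is that the approximants remain admissible kernels — i.e. that they still have mean value zero so that Corollary \ref{cpt-Lip} applies — and the harmless treatment of the $v=\fz$ endpoint. The substantive work was already done in Theorem \ref{thm-cpt} (via the Fr\'echet--Kolmogorov verification) and in Lemma \ref{lem-Toz}; the proof of Theorem \ref{thm=cpt-Toz} is the short closedness-of-compact-operators argument.
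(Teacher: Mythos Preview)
Your proposal is correct and matches the paper's own approach essentially verbatim: the paper simply states that Theorem \ref{thm=cpt-Toz} follows from Corollary \ref{cpt-Lip}, Lemma \ref{lem-Toz}, the density/closedness argument for compact operators, and the embedding $L^\fz(\mathbb{S}^{n-1})\subset L^v(\mathbb{S}^{n-1})$ for $v\in(1,\fz)$, omitting details. Your extra care about restoring the mean-zero condition on the Lipschitz approximants is a sensible point the paper leaves implicit.
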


To prove Theorem \ref{thm=cpt-Toz},
it suffices to use Corollary \ref{cpt-Lip}, Lemma \ref{lem-Toz},
the density argument of compact operators
(see the proof of Theorem \ref{thm-cpt}), and the fact
$L^\fz(\mathbb{S}^{n-1})\subset L^v(\mathbb{S}^{n-1})$
for any $v\in(1,\fz)$; we omit the details.

\begin{remark}\label{b=a+c}
It is fairly obvious to find that both Theorems \ref{thm-cpt} and \ref{thm=cpt-Toz}
still hold true if we replace $b\in L^r(\rn)$ by
$b=a+c$ with $a\in L^r(\rn)$ and $c$ being any constant;
we omit the details.
\end{remark}

As in \cite[Definition 2.1.1]{h21},
$K$ is called a \emph{non-degenerate Calder\'on--Zygmund kernel}
if $K$ satisfies (at least) one of the following two conditions:
\begin{itemize}
\item [{\rm(i)}] $K$ is a Calder\'on--Zygmund kernel satisfying \eqref{size},
\eqref{regular}, $\oz(t)\to0$ as $t\to0^+$ and there exists a $c_0\in(0,\fz)$
such that,
for any $y\in\rn$ and $r\in(0,\fz)$,
there exists an $x\in \rn\setminus B(y,r)$ satisfying
$$|K(x,y)|\ge\frac{1}{c_0 r^n};$$

\item [{\rm(ii)}] $K$ is a homogeneous Calder\'on--Zygmund kernel with
$\Omega\in L^1(\mathbb{S}^{n-1})\setminus\{0\}$.
In particular, there exists a Lebesgue point $\tz_0\in \mathbb{S}^{n-1}$ of $\Omega$
such that $\Omega(\tz_0)\neq 0$.
\end{itemize}
Combining Theorems \ref{thm-cpt} and \ref{thm=cpt-Toz},
Remark \ref{b=a+c}, and \cite[Theorem 1.0.1]{h21},
we immediately obtain the following conclusion on
non-degenerate Calder\'on--Zygmund operators.
Recall that, for any $r\in(0,\fz]$,
\[
\dot L^r(\rn):=\lf\{b\in L^r_\loc(\rn):\ \| b\|_{\dot L^r(\rn)}=\inf_c \| b-c\|_{r}<\infty\r\}.
\]
\begin{theorem}\label{thm-cpt-char}
Let $1<q<p<\fz$ and $T$ be a
non-degenerate Calder\'on--Zygmund operator whose kernel $K$
either
$$\text{satisfies \eqref{size}, \eqref{regular}, and \eqref{Dini}}$$
or
$K(x,y):=\frac{\Omega(x-y)}{|x-y|^n}$ for any $x,y\in\rn$ and $x\neq y$ with
\begin{align*}
\text{$\Omega$ satisfying \eqref{homo} and
$\Omega \in L^{v}(\mathbb{S}^{n-1})$ for some $v\in(1,\fz]$.}
\end{align*}
Then the following three statements are mutually equivalent:
\begin{enumerate}
\item [{\rm(i)}] $[b,T]$ is compact from $L^p(\rn)$ to $L^q(\rn)$;

\item [{\rm(ii)}] $[b,T]$ is bounded from $L^p(\rn)$ to $L^q(\rn)$;

\item [{\rm(iii)}] $b\in \dot L^r(\rn)$ with $\frac1r:=\frac1q-\frac1p$.
\end{enumerate}
\end{theorem}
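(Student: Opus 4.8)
\textbf{Proof proposal for Theorem \ref{thm-cpt-char}.}
The plan is to close the cycle of implications (i) $\Longrightarrow$ (ii) $\Longrightarrow$ (iii) $\Longrightarrow$ (i). The implication (i) $\Longrightarrow$ (ii) is immediate, since a compact operator between Banach spaces is, in particular, bounded. The implication (ii) $\Longrightarrow$ (iii) is precisely the ``bounded $\Longrightarrow$ symbol in $\dot L^r(\rn)$'' half of \cite[Theorem 1.0.1]{h21}, which applies exactly because $T$ is assumed \emph{non-degenerate}: in case (i) of the definition the lower bound $|K(x,y)|\ge (c_0 r^n)^{-1}$ on a suitable far-away point $x$ is what is used in \cite{h21}, and in case (ii) the existence of a Lebesgue point $\tz_0\in\mathbb{S}^{n-1}$ with $\Omega(\tz_0)\neq 0$ plays the same role; I would simply cite this.

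For the remaining implication (iii) $\Longrightarrow$ (i), I would split according to which of the two kernel hypotheses $T$ satisfies. If $b\in\dot L^r(\rn)$, write $b=a+c$ with $a\in L^r(\rn)$ and $c$ a constant; since $[b,T]=[a,T]$ (the commutator annihilates constants), it suffices to treat $b=a\in L^r(\rn)$, and then by Remark \ref{b=a+c} we need only the versions of the main theorems with $b\in L^r(\rn)$. In the first case, where $K$ satisfies \eqref{size}, \eqref{regular}, and \eqref{Dini}, Theorem \ref{thm-cpt} gives directly that $[b,T]$ is compact from $L^p(\rn)$ to $L^q(\rn)$. In the second case, where $K(x,y)=\Omega(x-y)/|x-y|^n$ with $\Omega$ satisfying \eqref{homo} and $\Omega\in L^v(\mathbb{S}^{n-1})$ for some $v\in(1,\fz]$, Theorem \ref{thm=cpt-Toz} applies; note that when $v=\fz$ one uses the elementary inclusion $L^\fz(\mathbb{S}^{n-1})\subset L^{v_0}(\mathbb{S}^{n-1})$ for any $v_0\in(1,\fz)$ to reduce to the hypotheses of that theorem. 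In either case we conclude (i), completing the cycle.

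There is no genuine obstacle here: the theorem is a bookkeeping assembly of results already established in the excerpt (Theorems \ref{thm-cpt} and \ref{thm=cpt-Toz}, Remark \ref{b=a+c}) together with the boundedness characterization of \cite{h21}. The only point requiring a word of care is the direction (ii) $\Longrightarrow$ (iii), which is the one place where non-degeneracy of $K$ is essential and which is not proved in this paper but imported from \cite{h21}; everything else is the routine observation that ``compact'' sits between ``bounded'' and the sufficient condition already proven to yield compactness. Accordingly I would present the argument as a short paragraph verifying the three arrows and pointing to the relevant statements, rather than reproducing any estimate.
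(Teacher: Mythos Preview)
Your proposal is correct and follows essentially the same approach as the paper's own proof: the paper also closes the cycle (i) $\Longrightarrow$ (ii) by definition of compactness, (ii) $\Longrightarrow$ (iii) by citing \cite[Theorem 1.0.1]{h21}, and (iii) $\Longrightarrow$ (i) by invoking Theorems \ref{thm-cpt} and \ref{thm=cpt-Toz} together with Remark \ref{b=a+c}. Your additional remarks on where non-degeneracy enters and on the reduction $L^\infty(\mathbb{S}^{n-1})\subset L^{v_0}(\mathbb{S}^{n-1})$ are accurate and match the paper's treatment.
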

\begin{proof}
The implication (i) $\Longrightarrow$ (ii) directly follows from the definition
of compact operators,
and the implication (ii) $\Longrightarrow$ (iii) is a part of
\cite[Theorem 1.0.1]{h21}.
Moreover, using Theorems \ref{thm-cpt} and \ref{thm=cpt-Toz} and
Remark \ref{b=a+c},
we obtain the implication (iii) $\Longrightarrow$ (i).
This then finishes the proof of Theorem \ref{thm-cpt-char}.
\end{proof}

\section{Multilinear case}\label{s3}

In this section, we briefly discuss how to
extend the previous results to the multilinear setting.
Let us begin with recalling some definitions and notation.
Throughout this section, we fix an $m\in\nn$ with $m\ge2$.
Let
$$\Delta:=\left\{(x, y_1,\ldots, y_m)\in (\rn)^{m+1}:\ x=y_1=\cdots =y_m\right\}$$
be the diagonal in $(\rn)^{m+1}$.
A function $K:\ (\rn)^{m+1}\setminus \Delta \to \mathbb C$ is called a
\emph{multilinear Calder\'on--Zygmund kernel} if there exists a positive constant
$C$ such that
\begin{align}\label{eq:size}
|K(x, y_1,\ldots, y_m)|\le \frac{C}{(\sum_{i=1}^m|x-y_i|)^{mn}}\
\text{ for any $(x, y_1,\ldots, y_m)\in(\rn)^{m+1}\setminus \Delta$},
\end{align}
and, for any $(x, y_1,\ldots, y_m)\in(\rn)^{m+1}\setminus \Delta$
and $h\in\rr$ with $|h|\le \frac 12 \max_{i\in\{1,\ldots,m\}} |x-y_i|$,
\begin{align}\label{eq:regularity}
&|K(x+h,y_1,\ldots, y_m)-K(x,y_1,\ldots, y_m)|\\
&\qquad+ \sum_{i=1}^{m}\lf|K(x,y_1,\ldots, y_i+h,\ldots,y_m)-K(x,y_1,\ldots,y_i,\ldots,y_m)\r|\noz\\
&\quad\le \frac{C}{(\sum_{i=1}^m|x-y_i|)^{mn}}\omega
\left(\frac{|h|}{\sum_{i=1}^m|x-y_i|}\right)\nonumber,
\end{align}
where $\omega:\ [0,1)\to[0,\fz)$ is an increasing function
with $\omega(0)=0$ and satisfies the Dini condition \eqref{Dini}.
Then $T$ is called an \emph{$m$-linear Calder\'on--Zygmund operator}
if $T$ is initially bounded from
$L^{q_1}(\rn)\times \cdots \times L^{q_m}(\rn)$ to $L^q(\rn)$
with $q_i\in (1,\infty]$ for any $i\in\{1,\ldots,m\}$
and $\frac 1q:=\sum_{i=1}^m \frac 1{q_i}\in(0,\fz)$,
and there exists a multilinear Calder\'on--Zygmund kernel $K$ such that,
for any $f_1,\dots, f_m\in C_{\rm c}^\infty(\rn)$
and any $x\notin \bigcap_{i=1}^m \supp f_i$,
\begin{equation}\label{eq:kernelrep}
 T(f_1,\dots, f_m)(x)=\int_{\mathbb R^{mn}} K(x,y_1,\ldots, y_m)
\prod_{i=1}^m f_i(y_i)\, dy_1\cdots d y_m,
\end{equation}
where, for any $i\in\{1,\ldots,m\}$,
$\supp f_i$ denotes the \emph{support} of $f_i$,
namely, the closure in $\rn$ of the set $\{x\in\rn:\ f_i(x)\neq 0\}$.
Moreover, we say $T$ is a \emph{non-degenerate multilinear Calder\'on--Zygmund operator}
if there exists a function $K$ such that \eqref{eq:size}, \eqref{eq:regularity},
and \eqref{eq:kernelrep} hold true with $\omega(0)\to 0$ when $t\to 0^+$
and, in addition, there exists a positive constant $c_0$ such that,
for any $y\in \rn$ and $r\in(0,\fz)$, there exists an $x\in \rn\setminus B(y, r)$ satisfying
\begin{equation}\label{eq:non-degen}
|K(x,y,\dots,y)|\ge \frac 1{c_0 r^{mn}}.
\end{equation}

For any $i\in\{1,\dots,m\}$,
the \emph{multilinear Calder\'on--Zygmund commutator} $[b,T]_i$
is defined by setting, for any suitable functions $\{f_i\}_{i=1}^m$ and any $x\in\rn$,
\begin{align*}
[b,T]_i(f_1,\dots,f_m)(x):=b(x)T(f_1,\dots,f_m)(x)-T(f_1,\dots,bf_i,\dots,f_m)(x).
\end{align*}
Without loss of generality, we may only consider the case $i=1$,
namely, the commutator $[b,T]_1$.
First of all, we establish the boundedness of
multilinear Calder\'on--Zygmund commutators.
In what follows, for any $\gz,\gz_1,\ldots,\gz_m\in(0,\fz]$,
we use $\|T\|_{(\gz_1,\ldots,\gz_m)\to \gz}$ to denote the \emph{operator norm}
of $T$ from $L^{\gz_1}(\rn)\times \cdots\times L^{\gz_m}(\rn)$ to $L^\gz(\rn)$.
\begin{proposition}\label{prop-bdd}
Let $0<q<p<\fz$, $p_i\in(1,\fz)$ for any $i\in\{1,\ldots,m\}$,
$\frac 1p=\frac 1{p_1}+\cdots+\frac 1{p_m}$,
$b\in \dot L^r(\rn)$ with $\frac1r:=\frac1q-\frac1p<\frac 1{p_1'}$,
and $T$ be an $m$-linear Calder\'on--Zygmund operator,
where $\frac1{p_1}+\frac1{p_1'}=1$.
Then the commutator $[b,T]_1$ is bounded
from $L^{p_1}(\rn)\times \cdots\times L^{p_m}(\rn)$ to $L^q(\rn)$.
\end{proposition}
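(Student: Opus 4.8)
The plan is to imitate the easy implication ``$b\in\dot L^r(\rn)\Longrightarrow[b,T]$ bounded'' from the linear result \cite[Theorem 1.0.1]{h21}, via the decomposition
\[
[b,T]_1(f_1,\dots,f_m)=b\,T(f_1,\dots,f_m)-T(bf_1,f_2,\dots,f_m),
\]
which is exactly the definition of $[b,T]_1$. Since $T$ is $m$-linear, $c\,T(f_1,\dots,f_m)-T(cf_1,f_2,\dots,f_m)=0$ for every constant $c$, so $[b,T]_1=[b-c,T]_1$; as $r\in(1,\fz)$ here (because $0<\frac1r<\frac1{p_1'}\le1$) and $b\in\dot L^r(\rn)$, after replacing $b$ by a suitable $b-c$ we may assume without loss of generality that $b\in L^r(\rn)$ with $\|b\|_r=\|b\|_{\dot L^r(\rn)}$. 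It then suffices to bound the two terms on the right-hand side separately, working a priori with $f_i\in C_{\rm c}^\infty(\rn)$ and passing to $f_i\in L^{p_i}(\rn)$ by density, and finally to combine them via the ($q$-)quasi-triangle inequality $\|F+G\|_q\le C_{(q)}(\|F\|_q+\|G\|_q)$.

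For the first term, since $\frac1q=\frac1r+\frac1p$, the H\"older inequality and the $L^{p_1}(\rn)\times\cdots\times L^{p_m}(\rn)\to L^p(\rn)$ boundedness of $T$ (valid for all $p_i\in(1,\fz)$ by the standard multilinear Calder\'on--Zygmund theory) give
\[
\lf\|b\,T(f_1,\dots,f_m)\r\|_q\le\|b\|_r\lf\|T(f_1,\dots,f_m)\r\|_p
\ls\|b\|_r\,\|T\|_{(p_1,\dots,p_m)\to p}\prod_{i=1}^m\|f_i\|_{p_i}.
\]
For the second term, set $\frac1{s_1}:=\frac1r+\frac1{p_1}$, so that the H\"older inequality yields $\|bf_1\|_{s_1}\le\|b\|_r\|f_1\|_{p_1}$. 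The key point is that the hypothesis $\frac1r<\frac1{p_1'}$ is precisely what forces $\frac1{s_1}<1$, that is, $s_1\in(1,\fz)$ (that $\frac1{s_1}>0$, i.e.\ $s_1<\fz$, is automatic from $q<p$). Hence $T$ is bounded from $L^{s_1}(\rn)\times L^{p_2}(\rn)\times\cdots\times L^{p_m}(\rn)$ to $L^q(\rn)$, again by the multilinear Calder\'on--Zygmund theory, where we use the scaling identity $\frac1{s_1}+\sum_{i=2}^m\frac1{p_i}=\frac1r+\sum_{i=1}^m\frac1{p_i}=\frac1r+\frac1p=\frac1q$. Consequently,
\[
\lf\|T(bf_1,f_2,\dots,f_m)\r\|_q\ls\|bf_1\|_{s_1}\prod_{i=2}^m\|f_i\|_{p_i}
\le\|b\|_r\prod_{i=1}^m\|f_i\|_{p_i}.
\]

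Combining the last two displays yields $\|[b,T]_1(f_1,\dots,f_m)\|_q\ls\|b\|_r\prod_{i=1}^m\|f_i\|_{p_i}=\|b\|_{\dot L^r(\rn)}\prod_{i=1}^m\|f_i\|_{p_i}$, which is the desired boundedness. The only step that is not purely mechanical is the verification $s_1>1$: this is exactly where the restriction $\frac1r<\frac1{p_1'}$ enters in an essential way, since otherwise $bf_1$ need not lie in any $L^s(\rn)$ with $s>1$ and the mapping property of $T$ applied to the tuple $(bf_1,f_2,\dots,f_m)$ — on which the whole argument hinges — would no longer be available. Everything else reduces to the H\"older inequality together with the off-the-shelf boundedness of $m$-linear Calder\'on--Zygmund operators.
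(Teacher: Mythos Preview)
Your proof is correct and follows essentially the same route as the paper: split $[b,T]_1$ into $bT(f_1,\dots,f_m)$ and $T(bf_1,f_2,\dots,f_m)$, estimate each by H\"older together with the multilinear Calder\'on--Zygmund boundedness of $T$, and use the hypothesis $\frac1r<\frac1{p_1'}$ precisely to ensure that the first exponent $s_1=\frac{p_1r}{p_1+r}$ lies in $(1,\infty)$. Your write-up is in fact slightly more careful than the paper's, since you make explicit both the reduction from $b\in\dot L^r(\rn)$ to $b\in L^r(\rn)$ and the need for the quasi-triangle inequality when $q\le1$.
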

\begin{proof}
Let all the symbols be the same as in the present proposition.
Then, by the H\"older inequality, the boundedness of
$T$ from $L^{p_1}(\rn)\times \cdots\times L^{p_m}(\rn)$ to $L^p(\rn)$
(see, for instance, \cite{dhl18,l18}), and the observation
$1/(\frac1{p_1}+\frac1r)=\frac {p_1r}{p_1+r}>1$, together with the boundedness of $T$
from $L^{\frac {p_1r}{p_1+r}}(\rn)\times \cdots\times L^{p_m}(\rn)$
to $L^q(\rn)$ (see, for instance, \cite{dhl18,l18} again),
we conclude that,
for any $(f_1,\ldots,f_m)\in L^{p_1}(\rn)\times \cdots\times L^{p_m}(\rn)$,
\begin{align*}
&\lf\|[b,T]_1(f_1,\ldots,f_m)\r\|_q\\
&\quad\le\lf\|bT(f_1,\ldots,f_m)\r\|_q+\lf\|T(bf_1,f_2,\ldots,f_m)\r\|_q\\
&\quad\le\|b\|_r\|T(f_1,\ldots,f_m)\|_p
+\|T\|_{(\frac {p_1r}{p_1+r},p_2,\ldots,p_m)\to q}
\|bf_1\|_{\frac {p_1r}{p_1+r}}\prod_{i=2}^m\|f_i\|_{p_i}\\
&\quad\le\lf[\|T\|_{(p_1,\ldots,p_m)\to p}
+\|T\|_{(\frac {p_1r}{p_1+r},p_2,\ldots,p_m)\to q}\r]
\|b\|_r \prod_{i=1}^m\|f_i\|_{p_i},
\end{align*}
which completes the proof of Proposition \ref{prop-bdd}.
\end{proof}
Next, we consider the compactness of $[b,T]_1$ and likewise introduce
the smooth truncated $m$-linear Calder\'on--Zygmund kernel:
for any $\eta\in(0,\fz)$, $x\in\rn$, and $y_i\in\rn$ with $i\in\{1,\ldots,m\}$,
\begin{align*}
K_\eta(x,y_1,\ldots, y_m):=
K(x,y_1,\ldots, y_m)\lf[1-\varphi\lf(\frac{\max_{1\le i\le m}|x-y_i|}{\eta}\r)\r]
\end{align*}
with the same $\varphi$ as in \eqref{phi-def}.
Let $b\in C_{\rm c}^1(\rn)$ and, for any $i\in\{1,\ldots,m\}$,
$f_i\in L^{p_i}(\rn)$ with $p_i\in(1,\fz)$ and $\frac1p:=\frac1{p_1}+\cdots+\frac1{p_m}$.
We may assume that $\supp(b)\subset B(\mathbf{0},R_0)$
for some positive constant $R_0$.
Then, by similar arguments to the proof of Lemma \ref{lem-Teta} above
(see also \cite[Lemma 2.1]{bdmt15}),
we have, for any $\eta\in(0,\fz)$ and $x\in\rn$,
\begin{align*}
&\lf|[b,T]_1(f_1,\ldots, f_m)(x)- [b,T_\eta]_1(f_1,\ldots, f_m)(x)\r|\\
&\quad\lesssim \eta \|\nabla b\|_\infty
\cm (f_1 {\mathbf 1}_{B(\mathbf{0},R_0+\eta)}, f_2, \ldots, f_m)(x),
\end{align*}
where the implicit positive constant is independent of $\eta$,
$b$, $\{f_i\}_{i=1}^m$, and $x$,
and where $\cm$ stands for the \emph{$m$-linear Hardy--Littlewood maximal operator}
defined by setting, for any $g_i\in L^1_\loc(\rn)$ with
$i\in\{1,\ldots,m\}$ and for any $x\in\rn$,
\[
\mathcal M (g_1, \ldots, g_m)(x):=
\sup_{{\rm ball}\ B\ni x}\prod_{i=1}^m\frac1{|B|}\int_B\lf|g_i(y_i)\r|\,dy_i.
\]
Then this immediately implies the multilinear analogy of Lemma \ref{lem-Teta}.
To obtain the multilinear analogy of Theorem \ref{thm-cpt}, note that,
for any $x\in \mathbb R^n$ with $|x|>M>2R_0$, we have
\begin{align*}
&\lf|[b,T_\eta]_1(f_1,\ldots, f_m)(x)\r| \\
&\quad\lesssim \|b\|_\infty \int_{\mathbb R^{mn}}
\frac{ (|f_1| \mathbf 1_{B(\mathbf 0, R_0)})(y_1)
\prod_{i=2}^m |f_i(y_i)|}{(\sum_{i=1}^m |x-y_i|)^{mn}}\,dy_1\cdots dy_m\\
&\quad\lesssim \|b\|_\infty \int_{\mathbb R^n}
\frac{  (|f_1| \mathbf 1_{B(\mathbf 0, R_0)})(y_1)}{|x-y_1|^n}\,dy_1
\prod_{i=2}^m \mathcal M f_i(x)
\lesssim \frac{\|b\|_\fz\|f_1\|_{p_1}R_0^{n/p_1'}}{|x|^n}\prod_{i=2}^m \mathcal M f_i(x),
\end{align*}
where the implicit positive constant is independent of $\eta$,
$b$, $\{f_i\}_{i=1}^m$, and $x$.
From this, $\frac1q:=\frac1p+\frac1r=\frac1{p_1}+\cdots+\frac1{p_m}+\frac1r$,
and the H\"older inequality, it follows that
\begin{align*}
&\lf\|[b,T_\eta]_1(f_1,\ldots, f_m){\mathbf 1}_{\{x\in\rn:\ |x|>M\}} \r\|_q\\
&\quad\lesssim
\|b\|_\fz R_0^{n/p_1'} \lf\| |x|^{-n} {\mathbf 1}_{\{x\in\rn:\ |x|>M\}} \r\|_{q_1}
\prod_{i=1}^m \|f_i\|_{p_i}\\
&\quad\sim \|b\|_\fz R_0^{n/p_1'} M^{-\frac n{q_1'}} \prod_{i=1}^m \|f_i\|_{p_i},
\end{align*}where $
\frac 1{q_1}:=\frac 1{p_1}+\frac 1r<1$ and
the implicit positive constant is independent of $\eta$,
$b$, $\{f_i\}_{i=1}^m$, and $x$.
This verifies the condition (ii) of Lemma \ref{lem-FK}.
The remaining arguments are quite similar to the proof of Theorem \ref{thm-cpt}.
So, by omitting some details, we obtain the following result.

\begin{theorem}\label{thm-multi}
Let $0<q<p<\fz$, $p_i\in(1,\fz)$ for any $i\in\{1,\ldots,m\}$,
$\frac 1p=\frac 1{p_1}+\cdots+\frac 1{p_m}$,
$b\in \dot L^r(\rn)$ with $\frac1r:=\frac1q-\frac1p<\frac 1{p_1'}$,
and $T$ be an $m$-linear Calder\'on--Zygmund operator,
where $\frac1{p_1}+\frac1{p_1'}=1$.
Then the commutator $[b,T]_1$ is compact
from $L^{p_1}(\rn)\times \cdots\times L^{p_m}(\rn)$ to $L^q(\rn)$.
\end{theorem}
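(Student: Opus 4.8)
The plan is to run the proof of Theorem \ref{thm-cpt} in the multilinear setting, using the ingredients already assembled above. First I would reduce to a nice symbol: by Proposition \ref{prop-bdd}, $[b,T]_1$ is bounded from $L^{p_1}(\rn)\times\cdots\times L^{p_m}(\rn)$ to $L^q(\rn)$ for every $b\in\dot L^r(\rn)$ with $\frac1r<\frac1{p_1'}$, so by the density of $C_{\rm c}^\fz(\rn)$ in $\dot L^r(\rn)$ (modulo constants) and the fact that an operator-norm limit of compact operators is compact (cf. \cite[p.\,278, Theorem(iii)]{y95}), it suffices to treat $b\in C_{\rm c}^\fz(\rn)$. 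Then I would pass from $[b,T]_1$ to the smoothly truncated $[b,T_\eta]_1$: this is the multilinear analogue of Lemma \ref{lem-Teta}, which follows from the pointwise bound $|[b,T]_1(f_1,\ldots,f_m)(x)-[b,T_\eta]_1(f_1,\ldots,f_m)(x)|\ls\eta\|\nabla b\|_\infty\,\cm(f_1\mathbf 1_{B(\mathbf{0},R_0+\eta)},f_2,\ldots,f_m)(x)$ displayed above, together with the $L^{p_1}(\rn)\times\cdots\times L^{p_m}(\rn)\to L^p(\rn)$ boundedness of the $m$-linear maximal operator $\cm$, H\"older's inequality (putting $\mathbf 1_{B(\mathbf{0},R_0+\eta)}$ into $L^r(\rn)$), and $\eta(R_0+\eta)^{n/r}\to0$. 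Hence it remains to prove that $[b,T_\eta]_1$ is compact for fixed $b\in C_{\rm c}^\fz(\rn)$ with $\supp b\subset B(\mathbf{0},R_0)$ and $\eta>0$, which I would do by verifying conditions (i)--(iii) of Lemma \ref{lem-FK} for $[b,T_\eta]_1\cf$ with $\cf\subset L^{p_1}(\rn)\times\cdots\times L^{p_m}(\rn)$ bounded. Condition (i) is the boundedness just recalled; condition (ii) is precisely the decay estimate $\|[b,T_\eta]_1(f_1,\ldots,f_m)\mathbf 1_{\{|x|>M\}}\|_q\ls\|b\|_\fz R_0^{n/p_1'}M^{-n/q_1'}\prod_i\|f_i\|_{p_i}$ displayed above (using $\supp b\subset B(\mathbf{0},R_0)$ and $\frac1{q_1}=\frac1{p_1}+\frac1r<1$), which tends to $0$ as $M\to\infty$ uniformly over $\cf$.

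The substance is condition (iii), equicontinuity. Writing $\vec y:=(y_1,\ldots,y_m)$ and $d\vec y:=dy_1\cdots dy_m$, for $\xi\in\rn\setminus\{\mathbf{0}\}$ I would expand, as in \eqref{L1+L2},
\begin{align*}
&[b,T_\eta]_1(f_1,\ldots,f_m)(x)-[b,T_\eta]_1(f_1,\ldots,f_m)(x+\xi)\\
&\quad=[b(x)-b(x+\xi)]\int_{(\rn)^m}K_\eta(x,\vec y)\prod_{i=1}^m f_i(y_i)\,d\vec y\\
&\qquad+\int_{B(\mathbf{0},R_0)\times(\rn)^{m-1}}[b(x+\xi)-b(y_1)]\lf[K_\eta(x,\vec y)-K_\eta(x+\xi,\vec y)\r]\prod_{i=1}^m f_i(y_i)\,d\vec y\\
&\qquad+b(x+\xi)\int_{(\rn\setminus B(\mathbf{0},R_0))\times(\rn)^{m-1}}\lf[K_\eta(x,\vec y)-K_\eta(x+\xi,\vec y)\r]\prod_{i=1}^m f_i(y_i)\,d\vec y\\
&\quad=:L_1(x)+L_2(x)+L_3(x).
\end{align*}
For $L_1$, the inner integral is controlled pointwise by $\cm(f_1,\ldots,f_m)(x)$ plus the $m$-linear maximal truncated Calder\'on--Zygmund operator applied to $(f_1,\ldots,f_m)$, both bounded from $L^{p_1}(\rn)\times\cdots\times L^{p_m}(\rn)$ to $L^p(\rn)$; putting the factor $\|b(\cdot)-b(\cdot-\xi)\|_r$ into $L^r(\rn)$ via H\"older and invoking $\|b(\cdot)-b(\cdot-\xi)\|_r\to0$ as $\xi\to\mathbf{0}$ (uniform continuity of $b$, equivalently continuity of translations on $L^r$) gives $\lim_{\xi\to\mathbf{0}}\|L_1\|_q=0$. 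For $L_2$ and $L_3$, the key input is the kernel-difference estimate: for $|\xi|<\eta/8$ one has $K_\eta(x,\vec y)=K_\eta(x+\xi,\vec y)$ whenever $\max_i|x-y_i|<\eta/4$, and otherwise, by \eqref{eq:regularity}, \eqref{eq:size}, and the mean value theorem applied to $\varphi$,
\[
\lf|K_\eta(x,\vec y)-K_\eta(x+\xi,\vec y)\r|\ls\frac{1}{(\sum_{i=1}^m|x-y_i|)^{mn}}\lf[\oz\lf(\frac{|\xi|}{\sum_{i=1}^m|x-y_i|}\r)+\frac{|\xi|}{\sum_{i=1}^m|x-y_i|}\r].
\]
Summing over the dyadic shells $\{2^k\eta/4\le\sum_i|x-y_i|<2^{k+1}\eta/4\}$ and integrating out $y_2,\ldots,y_m$ produces the bound $\big[\tfrac{|\xi|}{\eta}+\int_0^{8|\xi|/\eta}\tfrac{\oz(s)}{s}\,ds\big]\cm(f_1,\ldots,f_m)(x)$, exactly as in the linear case; in $L_2$ one pulls $\|b\|_\infty$ out and uses that $f_1$ carries $\mathbf 1_{B(\mathbf{0},R_0)}$ (applying $\cm$ with target $L^q$ via H\"older with $\|\mathbf 1_{B(\mathbf{0},R_0)}\|_r$), and in $L_3$ one puts $\|b\|_r$ into $L^r(\rn)$ and applies $\cm$ with target $L^p$. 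Both bounds carry the prefactor $\tfrac{|\xi|}{\eta}+\int_0^{8|\xi|/\eta}\tfrac{\oz(s)}{s}\,ds\to0$ by the Dini condition \eqref{Dini}, uniformly over $\cf$, giving (iii).

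The main obstacle is the exponent bookkeeping in the last step rather than any new idea: every application of H\"older must respect $\frac1q=\frac1{p_1}+\cdots+\frac1{p_m}+\frac1r$ and $\frac1{q_1}=\frac1{p_1}+\frac1r<1$, and one needs the boundedness not only of $\cm:\,L^{p_1}(\rn)\times\cdots\times L^{p_m}(\rn)\to L^p(\rn)$ but also of the modified $m$-linear maximal operator with its first argument in $L^{\frac{p_1r}{p_1+r}}(\rn)$, which requires $\frac{p_1r}{p_1+r}>1$ --- precisely the hypothesis $\frac1r<\frac1{p_1'}$. Granted this, all estimates above run through verbatim, and Lemma \ref{lem-FK} yields the asserted compactness; the remaining details being straightforward adaptations of the proof of Theorem \ref{thm-cpt}, I would omit them.
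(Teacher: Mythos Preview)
Your proposal is correct and follows essentially the same approach as the paper: reduce to $b\in C_{\rm c}^\fz(\rn)$ by density and Proposition \ref{prop-bdd}, then to $[b,T_\eta]_1$ via the displayed multilinear analogue of Lemma \ref{lem-Teta}, and finally verify (i)--(iii) of Lemma \ref{lem-FK} by the same $L_1+L_2+L_3$ decomposition as in \eqref{L1+L2}, using the kernel-difference estimate and dyadic summation to reach the $m$-linear maximal operator. In fact the paper itself only spells out conditions (i) and (ii) and then writes that ``the remaining arguments are quite similar to the proof of Theorem \ref{thm-cpt}'', so your write-up of (iii), including the observation that the $L_2$ step needs $\cm$ bounded with first argument in $L^{p_1r/(p_1+r)}$ (hence the hypothesis $\frac1r<\frac1{p_1'}$), is a faithful expansion of what the paper leaves implicit.
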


One may be curious about whether or not the opposite direction holds true.
Namely, if we know that $T$ is an $m$-linear Calder\'on--Zygmund operator
associated with some non-degenerate kernel
and that the commutator $[b,T]_1$ is compact
from $L^{p_1}(\rn)\times \cdots\times L^{p_m}(\rn)$ to $L^q(\rn)$,
can we show that $b=a+c$ for some $a\in L^r$ and some constant $c$?
We give an affirmative answer to this question when $q\in(1,\fz)$
and $b$ is real-valued as follows.

\begin{theorem}\label{thm:multi}
Let $1< q<p<\fz$, $p_i\in(1,\fz)$ for any $i\in\{1,\ldots,m\}$,
and $\frac 1p=\frac 1{p_1}+\cdots+\frac 1{p_m}$.
Let $T$ be an $m$-linear non-degenerate  Calder\'on--Zygmund operator
and $b\in L_{\loc}^1(\rn)$ be real-valued.
If the commutator $[b,T]_1$ is bounded
from $L^{p_1}(\rn)\times \cdots\times L^{p_m}(\rn)$ to $L^q(\rn)$,
then $b\in \dot L^r(\rn)$ with $\frac1r:=\frac1q-\frac1p$.
\end{theorem}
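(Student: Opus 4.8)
The plan is to adapt the lower-bound argument from the linear case \cite[Theorem 1.0.1]{h21}, whose three pillars are the median method, the expectation of random signs, and a sparse characterization of $\dot L^r(\rn)$ (Proposition \ref{prop:lr}). Since $[b,T]_1$ annihilates additive constants, we work with $b$ modulo constants, and by Proposition \ref{prop:lr} it suffices to bound
$$\Big(\sum_{Q\in\cs}|Q|\,\Big(\inf_{c}\langle|b-c|\rangle_{Q}\Big)^{r}\Big)^{1/r}$$
uniformly over sparse families $\cs$ of cubes by a multiple of $A:=\|[b,T]_1\|_{(p_1,\dots,p_m)\to q}$, where $\frac1r=\frac1q-\frac1p$. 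Real-valuedness of $b$ only serves to make the medians below meaningful, and $q>1$ will be used in the final superposition.

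First I would prove a local lower bound on one cube. Fix a cube $Q$, let $\alpha_Q$ be a median of $b$ on $Q$, and set $E_Q^{\pm}:=\{y\in Q:\pm(b(y)-\alpha_Q)\ge0\}$, so $|E_Q^{\pm}|\ge|Q|/2$. Using the non-degeneracy \eqref{eq:non-degen} with radius $\sim\ell(Q)$, the size bound \eqref{eq:size} (which forces the non-degenerate point to lie within distance $\sim\ell(Q)$ of $Q$), and the regularity \eqref{eq:regularity} with $\omega(0)=0$, one constructs a companion cube $\widehat Q$ with $\ell(\widehat Q)\sim\ell(Q)$ and $\dist(Q,\widehat Q)\sim\ell(Q)$ on which $K(x,y_1,\dots,y_m)$ has a fixed sign $\sigma_Q$ and magnitude $\sim|Q|^{-m}$ for all $(x,y_1,\dots,y_m)\in\widehat Q\times Q^{m}$. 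Testing $[b,T]_1$ against $f_1=\mathbf 1_{E_Q^{-}}$, resp.\ $f_1=\mathbf 1_{E_Q^{+}}$, and $f_i=\mathbf 1_{Q}$ for $i\ge2$, and invoking \eqref{eq:kernelrep} (valid at $x\in\widehat Q$ since $x\notin\bigcap_i\supp f_i\subset Q$, after a routine smoothing of the indicators): the one-sided inequalities $b\le\alpha_Q$ on $E_Q^{-}$, resp.\ $b\ge\alpha_Q$ on $E_Q^{+}$, together with the constant sign of the kernel give the pointwise estimate, crucially \emph{without a free factor of $b(x)$},
$$\big|[b,T]_1(f_1,\dots,f_m)(x)\big|\gs(b(x)-\alpha_Q)_{+},\ \text{resp.\ }\gs(\alpha_Q-b(x))_{+},\qquad x\in\widehat Q.$$
Since $\|f_1\|_{p_1}\prod_{i=2}^{m}\|f_i\|_{p_i}\ls|Q|^{1/p}$, adding the two choices yields $\|(b-\alpha_Q)\mathbf 1_{\widehat Q}\|_q\ls A\,|Q|^{1/p}$. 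Using one-sided sets instead of a mean-zero function is exactly what removes the uncontrollable error $\omega(\cdot)\,|b(x)|$ on $\widehat Q$ that the cancellation $\int_Q f_1=0$ would produce; note, though, that this single-cube estimate controls the oscillation of $b$ on $\widehat Q$, not on $Q$, and does not sum on its own.

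The main step is then to glue these estimates along a sparse family. Given a sparse $\cs$, writing $E_Q\subset Q$ for the pairwise disjoint sets of $\cs$ (arranged so that $b\le\alpha_Q$, resp.\ $b\ge\alpha_Q$, on $E_Q$ on a definite fraction of $|Q|$), I would superpose test tuples $f_i:=\sum_{Q\in\cs}\lambda_{Q,i}\,\varepsilon_Q\,\mathbf 1_{E_Q}$, with independent random signs $\varepsilon_Q$ inserted in the first slot, and take expectations: Khintchine's inequality controls the interactions, while for each $Q_0$ the diagonal contribution reproduces, on $\widehat{Q_0}$, the bound of the previous paragraph up to the factor $\prod_i\lambda_{Q_0,i}$. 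Bounding $\|[b,T]_1(f_1,\dots,f_m)\|_q$ from below on the union $\bigcup_{Q\in\cs}\widehat Q$ and from above by $A\prod_i\|f_i\|_{p_i}$ leads to
$$\sum_{Q\in\cs}\Big(\prod_i\lambda_{Q,i}\Big)^{q}|Q|\,\nu_Q^{q}\ls A^{q}\prod_i\Big(\sum_{Q\in\cs}\lambda_{Q,i}^{p_i}|Q|\Big)^{q/p_i},\quad\nu_Q:=|\widehat Q|^{-1/q}\|(b-\alpha_Q)\mathbf 1_{\widehat Q}\|_q$$
(running both one-sided choices). The choice $\lambda_{Q,i}:=\nu_Q^{r/p_i}$, together with the exponent arithmetic $\frac1r=\frac1q-\frac1p$ — which gives $q\sum_i\frac{r}{p_i}+q=r$ and $q\sum_i\frac1{p_i}=\frac qp$ — turns this into $S\ls A^{q}S^{q/p}$ with $S:=\sum_{Q\in\cs}|Q|\,\nu_Q^{r}$, hence $S\ls A^{r}$, i.e.\ $\big(\sum_{Q\in\cs}|Q|\,(\inf_c\langle|b-c|\rangle_{\widehat Q})^{r}\big)^{1/r}\ls A$. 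The part I expect to be the genuine obstacle is to match this with the quantity of the first paragraph, indexed by $Q\in\cs$ and not by the companions $\widehat Q$: one must control the geometry of $Q\mapsto\widehat Q$ (comparability of scales and positions, bounded overlap of $\{\widehat Q:Q\in\cs\}$, possibly after partitioning $\cs$ into finitely many subfamilies) and perform a covering/chaining transfer of oscillation from $\widehat Q$ back to $Q$, while absorbing the near-diagonal and off-diagonal contributions in the superposition through the sparseness of $\cs$ and the random signs; here $q>1$ legitimizes the $L^q$ triangle inequality over $\bigcup\widehat Q$ and Khintchine's inequality at exponent $q$. Once this is done, Proposition \ref{prop:lr} gives $b\in\dot L^r(\rn)$.
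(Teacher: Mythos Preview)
Your proposal identifies the right three ingredients but contains a genuine gap that you yourself flag as ``the genuine obstacle'', and it is not merely technical. By taking the median $\alpha_Q$ on the \emph{original} cube $Q$ and testing with $f_1=\mathbf 1_{E_Q^\pm}\subset Q$, the lower bound you extract controls $|b(x)-\alpha_Q|$ for $x\in\widehat Q$, i.e.\ the deviation of $b$ on the \emph{companion} cube from a constant adapted to $Q$. This is neither the oscillation of $b$ on $Q$ (which is what Proposition~\ref{prop:lr} demands) nor the oscillation on $\widehat Q$ (since $\alpha_Q$ need not be near a median of $b$ on $\widehat Q$), and no covering/chaining argument transfers one to the other for a general $b\in L^1_\loc$: there is simply no inequality bounding $\inf_c\langle|b-c|\rangle_Q$ by $\langle|b-\alpha_Q|\rangle_{\widehat Q}$. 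Nor can you easily pass to the family $\{\widehat Q:Q\in\cs\}$, since the direction of $Q\mapsto\widehat Q$ is dictated by the kernel and varies with $Q$, so sparseness of $\{\widehat Q\}$ is not automatic.

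The paper removes this obstacle by simply swapping the roles: take $\alpha_S$ to be a median of $b$ on the \emph{companion} cube $\widetilde S$, but keep $f_1=\mathbf 1_{S\cap\{b\le\alpha_S\}}$ supported in the \emph{original} cube $S$. For $x\in\widetilde S\cap\{b\ge\alpha_S\}$ (a set of measure $\ge|S|/2$ by the choice of median), one has $b(x)-b(y_1)\ge\alpha_S-b(y_1)\ge0$, and integrating in $y_1$ recovers $\int_S[b-\alpha_S]_-$ directly; together with the symmetric estimate this controls $\int_S|b-\langle b\rangle_S|\le2\int_S|b-\alpha_S|$ on $S$ with no transfer needed. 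There is also a secondary gap in your superposition: a single sign family $\{\varepsilon_Q\}$ shared across all $m$ input slots does not isolate the diagonal in the $m$-fold expansion, and Khintchine's inequality does not apply to the resulting products $\varepsilon_{Q_1}\cdots\varepsilon_{Q_m}$. The paper instead pairs against a dual function $g_S\mathbf 1_{\widetilde S}$ and uses $m$ independent sign families arranged telescopically, $\varepsilon^{(1)},\varepsilon^{(1)}\varepsilon^{(2)},\dots,\varepsilon^{(m-1)}\varepsilon^{(m)},\varepsilon^{(m)}$, so that taking expectations collapses the $(m{+}1)$-fold sum to the diagonal; then H\"older, the assumed boundedness of $[b,T]_1$, and \cite[Lemma~2.5.4]{h21} applied to each factor (using sparseness and the constraint $\sum_S|S|\lambda_S^{r'}\le1$) close the estimate.
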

To prove the above theorem,
we need the following characterization of $\dot L^r(\rn)$,
whose proof can be found at \cite[Proposition 3.2]{AHLMO}.
In what follows, a collection $\mathscr{S}$ of cubes is said to be \emph{sparse}
if there exists a family of pairwise disjoint subsets,
$\{E(S)\}_{S\in\mathscr{S}}$, such that, for any $S\in\mathscr{S}$,
$$E(S)\subset S\text{ and }|E(S)| \ge \frac12 |S|;$$
for any locally integrable function $b$
and any bounded measurable set $S$ with $|S|>0$, let
$$\langle b \rangle_S:=\frac{1}{|S|}\int_S b(x)\,dx.$$

\begin{proposition}\label{prop:lr}
Let $r\in (1,\infty)$ and $b\in L^r_\loc(\rn)$. Then
\begin{align*}
\| b\|_{\dot L^r(\rn)}\sim
\sup \lf\{ \sum_{S\in \mathscr S}\lambda_S
\int_S |b(x)- \langle b\rangle_S|\,dx:\ \mathscr S \text{ is sparse, }
\sum_{S\in \mathscr S} |S|\lambda_S^{r'}\le 1 \r\}
\end{align*}
with the positive equivalence constants independent of $b$.
\end{proposition}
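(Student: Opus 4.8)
Write $N(b)$ for the supremum appearing on the right-hand side of the claimed equivalence, and recall that $\|b\|_{\dot L^r(\rn)}=\inf_c\|b-c\|_r$. The inequality $N(b)\ls\|b\|_{\dot L^r(\rn)}$ is the routine half: fix a sparse family $\mathscr S$ with $\sum_{S\in\mathscr S}|S|\lambda_S^{r'}\le1$ and an arbitrary constant $c$, use the elementary bound $\int_S|b-\langle b\rangle_S|\le2\int_S|b-c|$ together with two applications of the H\"older inequality (first on $\rn$, then on the sequence space) to write
\[
\sum_{S\in\mathscr S}\lambda_S\int_S|b-\langle b\rangle_S|\le2\int_\rn|b-c|\sum_{S\in\mathscr S}\lambda_S\mathbf 1_S\le2\,\|b-c\|_r\,\Bigl\|\sum_{S\in\mathscr S}\lambda_S\mathbf 1_S\Bigr\|_{r'},
\]
and then bound $\|\sum_S\lambda_S\mathbf 1_S\|_{r'}$ by $(\sum_S|S|\lambda_S^{r'})^{1/r'}\le1$ via duality and the dyadic Carleson embedding theorem, exploiting that sparseness makes $\{|S|\}_{S\in\mathscr S}$ a Carleson sequence with constant $2$. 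Taking the infimum over $c$ and the supremum over admissible $(\mathscr S,\lambda)$ gives $N(b)\ls\|b\|_{\dot L^r(\rn)}$.

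For the reverse inequality $\|b\|_{\dot L^r(\rn)}\ls N(b)$ (we may assume $N(b)<\fz$), the first step is to reduce to a single cube. I would show $\|(b-\langle b\rangle_Q)\mathbf 1_Q\|_r\ls N(b)$ uniformly over all cubes $Q$; granting this, apply it to the exhaustion $Q_j:=[-j,j]^n$, observe (by H\"older and the uniform estimate) that the averages $\langle b\rangle_{Q_j}$ stay bounded, extract a subsequence $\langle b\rangle_{Q_{j_k}}\to c$, and conclude by Fatou's lemma that $\|b-c\|_r\le\liminf_k\|(b-\langle b\rangle_{Q_{j_k}})\mathbf 1_{Q_{j_k}}\|_r\ls N(b)$, hence $\|b\|_{\dot L^r(\rn)}\ls N(b)$.

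The heart of the matter is the single-cube estimate. Fixing a cube $Q$ and viewing it as a cube of some dyadic grid, I would run a Calder\'on--Zygmund stopping-time decomposition of $b$ inside $Q$: put $Q\in\mathscr S$, and let the $\mathscr S$-children of $S\in\mathscr S$ be the maximal dyadic $S'\subsetneq S$ with $\langle|b-\langle b\rangle_S|\rangle_{S'}>2\langle|b-\langle b\rangle_S|\rangle_S$. A weak-type-$(1,1)$ computation gives $\sum_{S'}|S'|\le\tfrac12|S|$ over these children, so $\mathscr S$ is sparse; moreover almost every $x\in Q$ lies in $E(S):=S\setminus\bigcup\{S'\}$ for a unique $S=S(x)\in\mathscr S$, on which $|b(x)-\langle b\rangle_{S(x)}|\le2\langle|b-\langle b\rangle_{S(x)}|\rangle_{S(x)}$ by Lebesgue differentiation, and telescoping $\langle b\rangle_{S(x)}-\langle b\rangle_Q$ along the finite chain of $\mathscr S$-ancestors of $S(x)$ (each step costing at most a dimensional constant times $\langle|b-\langle b\rangle_S|\rangle_S$) yields
\[
|b(x)-\langle b\rangle_Q|\ls\sum_{S\in\mathscr S}\langle|b-\langle b\rangle_S|\rangle_S\,\mathbf 1_S(x)\qquad\text{for a.e.\ }x\in Q.
\]
I would then take $L^r(\rn)$ norms, dualize against $0\le g\in L^{r'}(\rn)$ with $\|g\|_{r'}\le1$, use the identity $\langle|b-\langle b\rangle_S|\rangle_S\int_S g=\langle g\rangle_S\int_S|b-\langle b\rangle_S|$ to reduce to bounding $\sum_{S\in\mathscr S}\langle g\rangle_S\int_S|b-\langle b\rangle_S|$, and apply the Carleson embedding theorem once more to get $\sum_{S\in\mathscr S}|S|\langle g\rangle_S^{r'}\le C$, so that $\lambda_S:=C^{-1/r'}\langle g\rangle_S$ is admissible in the definition of $N(b)$ and the sum is $\le C^{1/r'}N(b)$; combining the displays finishes this direction.

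The step I expect to be the main obstacle is the stopping-time construction: the threshold must be tuned so that $\mathscr S$ is \emph{simultaneously} sparse and fine enough for the displayed pointwise domination to hold with a constant depending only on $n$ and $r$. The subtlety is that the stopping tree below a cube $S$ can be arbitrarily deep, so the pointwise bound cannot be obtained by naively summing over all dyadic subcubes of $Q$; it is essential that the telescoping be carried out only along the chain of stopping ancestors of $x$, where the contributions decay geometrically. The remaining ingredients — H\"older's inequality, $L^r$--$L^{r'}$ duality, and the Carleson embedding theorem — are entirely standard.
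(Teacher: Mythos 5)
Your argument is correct. The paper does not prove Proposition \ref{prop:lr} itself but defers to \cite[Proposition 3.2]{AHLMO}, and your proof is essentially the standard argument behind that reference: H\"older plus the Carleson property of sparse families for the easy inequality, and a Calder\'on--Zygmund stopping-time construction with telescoping along the chain of stopping ancestors, followed by duality, for the converse, with the passage from a single cube to all of $\rn$ handled by exhaustion and Fatou. The only point worth polishing is that the sparse families in the statement need not come from a single dyadic grid, so instead of invoking the dyadic Carleson embedding theorem you should bound $\sum_{S\in\mathscr S}|S|\langle g\rangle_S^{r'}\le 2\sum_{S\in\mathscr S}\int_{E(S)}(\mathcal{M}g)^{r'}\le 2\|\mathcal{M}g\|_{r'}^{r'}\lesssim\|g\|_{r'}^{r'}$ directly, using the pairwise disjointness of the sets $E(S)$ and the boundedness of the Hardy--Littlewood maximal operator; this is immediate and does not affect the rest of the proof.
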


Now, we are ready to prove Theorem \ref{thm:multi},
via following the median method used in \cite{h21,Li-22}.
In what follows, for any locally integrable functions $f$ and $g$, let
$\langle f,g \rangle:=\int_{\rn}|f(x)g(x)|\,dx$.
\begin{proof}[Proof of Theorem \ref{thm:multi}]
Let all the symbols be the same as in the present theorem.
By \eqref{eq:non-degen}, we find that, for any cube $Q$,
there exists a cube $\widetilde Q$ such that $\ell(Q)=\ell(\widetilde Q)$
and $\dist(Q, \widetilde Q)\ge C_0\ell(Q)$,
and there exists a $\sigma_Q \in \mathbb C$ with $|\sigma_Q|=1$ and
\begin{align}\label{real>}
\Re \lf(\sigma_Q K(x, y_1,\dots, y_m)\r)
\sim |Q|^{-m},\quad
\forall\, x\in \widetilde Q,\ \forall \, y_1,\dots, y_m\in Q;
\end{align}
see, for instance, \cite[Proposition 2.2.1 and Remark 4.1.2]{h21}.
Here and hereafter, we use $\Re(z)$ to denote the \emph{real part} of any $z\in\cc$.
Now, we apply Proposition \ref{prop:lr} and fix a sparse family $\mathscr S$.
For any cube $S\in \mathscr S$, let $\alpha_S$ be the \emph{median} of $b$ on $\widetilde S$,
namely,
\[
\min\lf( |\widetilde S\cap \{b\le \alpha_S\}|,\ |\widetilde S\cap\{b\ge \alpha_S\} |\r)
\ge \frac 12|\widetilde S|=\frac 12 |S|.
\]
By this and \eqref{real>}, we obtain,
for any $x\in \widetilde S \cap\{b\ge \alpha_S\}$,
\begin{align*}
&\int_S \lf[b(y_1)-\alpha_S\r]_-\,dy_1\\
&\quad= \int_{S\cap \{b\le \alpha_S\}} \lf[\alpha_S-b(y_1)\r]\,dy_1\\
&\quad\lesssim |S|\Re\lf( \sigma_S
\int_{S^{m-1}}\int_{S\cap \{y_1: b(y_1)\le \alpha_S\}}
[b(x)-b(y_1)]K(x, y_1,\cdots, y_m)\,dy_1 \cdots dy_m\r)\\
&\quad=|S| \Re\lf( \sigma_S [b, T]_1 (\mathbf 1_{S\cap \{b\le \alpha_S\}},
\mathbf 1_S,\cdots, \mathbf 1_S)(x)\r).
\end{align*}
Likewise, for any $x\in \widetilde S\cap\{b\le \alpha_S\}$,
we have
\begin{align*}
&\int_S \lf[b(y_1)-\alpha_S\r]_+\,dy_1
\lesssim -|S| \Re\lf( \sigma_S [b, T]_1 (\mathbf 1_{S\cap \{b\ge \alpha_S\}},
\mathbf 1_S,\cdots, \mathbf 1_S)(x)\r).
\end{align*}
Thus, for any $\{\lambda_S\}_{S\in\mathscr{S}}\subset[0,\fz)$, we find that
\begin{align*}
&\sum_{S\in \mathscr S}\lambda_S \int_S |b(x)- \langle b\rangle_S|\,dx\\
&\quad \le 2\sum_{S\in \mathscr S}\lambda_S\int_S |b(x)-\alpha_S|\,dx\\
&\quad \ls\sum_{S\in \mathscr S}\lambda_S |S|
\Re\lf( \sigma_S \lf\langle [b, T]_1 (\mathbf 1_{S\cap \{b\le \alpha_S\}},
\mathbf 1_S,\cdots, \mathbf 1_S)\r\rangle_{\widetilde S \cap\{b\ge \alpha_S\}}\r)\\
&\qquad-\sum_{S\in \mathscr S}\lambda_S |S|
\Re\lf( \sigma_S \lf\langle [b, T]_1 (\mathbf 1_{S\cap \{b\ge \alpha_S\}},
\mathbf 1_S,\cdots, \mathbf 1_S)\r\rangle_{\widetilde S \cap\{b\le \alpha_S\}}\r)\\
&\quad \lesssim \sum_{S\in \mathscr S}\lambda_S |S|  \lf\langle
\lf| [b, T]_1 (\mathbf 1_{S\cap \{b\le \alpha_S\}}, \mathbf 1_S,\cdots,
\mathbf 1_S)\r|\r\rangle_{\widetilde S}\\
&\qquad+\sum_{S\in \mathscr S}\lambda_S |S|  \lf\langle \lf| [b, T]_1
(\mathbf 1_{S\cap \{b\ge \alpha_S\}}, \mathbf 1_S,\cdots, \mathbf 1_S)\r|\r\rangle_{\widetilde S} .
\end{align*}
We next only focus on estimating the first term in the last step
since the other one is similar.
Moreover, by the monotone convergence theorem,
we may assume that $\mathscr S$ contains only finitely many elements.
Now, let $g_S$ be the function such that
\[
\lf| [b, T]_1 (\mathbf 1_{S\cap \{b\le \alpha_S\}}, \mathbf 1_S,\cdots, \mathbf 1_S)\r|
=  [b, T]_1 (\mathbf 1_{S\cap \{b\le \alpha_S\}}, \mathbf 1_S,\cdots, \mathbf 1_S) g_S.
\]
For each $j\in\{1,\ldots, m\}$, let $\{\ve_{S}^{(j)}\}_{S\in \mathscr S}$
be a collection of independent random signs,
and we denote by $\mathbb E^{(j)}$ the corresponding \emph{expectation},
namely, for any function $f$ defined on two random signs,
$$\mathbb E^{(j)}f(\ve_1,\ve_2)
:=\sum_{\ve_1=\pm1,\ \ve_2=\pm1}
\frac{f(\ve_1,\ve_2)}{4},$$
where $\ve_1,\ve_2\in\{\ve_{S}^{(j)}\}_{S\in \mathscr S}$.
Write $\mathbb E:= \mathbb E^{(1)}\cdots \mathbb E^{(m)}$ and observe that
$$1=r'\lf(\frac1{q'}+\frac1p\r)
=\frac{r'}{q'}+\frac{r'}{p_1}+\cdots+\frac{r'}{p_m},$$
where $\frac1r+\frac1{r'}=1=\frac1q+\frac1{q'}$.
From this, the linearity of $[b, T]_1(\cdot)$, the H\"older inequality,
Proposition \ref{prop-bdd},
\cite[Lemma 2.5.4]{h21}, and $\sum_{S\in \mathscr S} |S|\lambda_S^{r'}\le 1$,
it follows that
\begin{align*}
&\sum_{S\in \mathscr S}\lambda_S |S|  \lf\langle \lf| [b, T]_1
\lf(\mathbf 1_{S\cap \{b\le \alpha_S\}}, \mathbf 1_S,\ldots, \mathbf 1_S\r)\r|\r\rangle_{\widetilde S} \\
&\quad=\sum_{S\in \mathscr S} \lf\langle[b, T]_1\left(\lambda_S^{\frac{r'}{p_1}}\mathbf 1_{S\cap \{b\le \alpha_S\}}, \lambda_S^{\frac{r'}{p_2}}\mathbf 1_S,\ldots, \lambda_S^{\frac{r'}{p_m}}\mathbf 1_S\right),\  \lambda_S^{\frac{r'}{q'}}g_S  \mathbf 1_{\widetilde S}\r\rangle \\
&\quad= \mathbb E\lf\langle[b, T]_1\lf(\sum_{S_1\in \mathscr S}\ve_{S_1}^{(1)}
\lambda_{S_1}^{\frac{r'}{p_1}}\mathbf 1_{S_1\cap \{b\le \alpha_{S_1}\}},
\sum_{S_2\in \mathscr S}\ve_{S_2}^{(1)} \ve_{S_2}^{(2)}\lambda_{S_2}^{\frac{r'}{p_2}}\mathbf 1_{S_2},
\ldots,\r.\r.\\
&\qquad\lf.\lf.\sum_{S_m\in \mathscr S} \ve_{S_{m}}^{(m-1)}\ve_{S_m}^{(m)}\lambda_{S_m}^{\frac{r'}{p_m}}\mathbf 1_{S_m}\r),\sum_{S_{m+1}\in \mathscr S}\ve_{S_{m+1}}^{(m)}
\lambda_{S_{m+1}}^{\frac{r'}{q'}}g_S  \mathbf 1_{\widetilde S_{m+1}}\r\rangle\\
&\quad\lesssim \mathbb E\lf\|\sum_{S_1\in \mathscr S}\ve_{S_1}^{(1)}
\lambda_{S_1}^{\frac{r'}{p_1}}\mathbf 1_{S_1\cap \{b\le \alpha_{S_1}\}}\r\|_{p_1}
\cdots
\lf\|\sum_{S_m\in \mathscr S} \ve_{S_{m}}^{(m-1)}\ve_{S_m}^{(m)}
\lambda_{S_m}^{\frac{r'}{p_m}}\mathbf 1_{S_m}\r\|_{p_m}\\
&\qquad\cdot \lf\|\sum_{S_{m+1}\in \mathscr S}\ve_{S_{m+1}}^{(m)}\lambda_{S_{m+1}}^{\frac{r'}{q'}}g_S
\mathbf 1_{\widetilde S_{m+1}}\r\|_{q'}\\
&\quad\lesssim  \lf\|\sum_{S_1\in \mathscr S}
\lambda_{S_1}^{\frac{r'}{p_1}}\mathbf 1_{S_1}\r\|_{p_1}
\cdots
\lf\|\sum_{S_m\in \mathscr S}
\lambda_{S_m}^{\frac{r'}{p_m}}\mathbf 1_{S_m}\r\|_{p_m}
\cdot \lf\|\sum_{S_{m+1}\in \mathscr S} \lambda_{S_{m+1}}^{\frac{r'}{q'}}
\mathbf 1_{\widetilde S_{m+1}}\r\|_{q'}\\
&\quad\sim\lf(\sum_{S\in \mathscr S}\lambda_S^{r'}|S|\r)
^{\frac1{p_1}+\cdots+\frac1{p_m}+\frac1{q'}}
\lesssim 1.
\end{align*}
This, together with Proposition \ref{prop:lr},
then completes the proof of Theorem \ref{thm:multi}.
\end{proof}

The above multilinear results can also be extended to the iterated case.
In what follows, for any $k\in\nn$, let $T^{(k)}_{b,1}:=[b,T^{(k-1)}_{b,1}]_1$
be the $k$-times iterated commutator of $[b,T]_1=:T^{(1)}_{b,1}$.
Combining \cite[Theorem 4.0.1]{h21} and Theorems \ref{thm-multi} and \ref{thm:multi},
we immediately have the following results; we omit the details here.
\begin{theorem}\label{thm-multi-iterated}
Let $0<q<p<\fz$, $p_i\in(1,\fz)$ for any $i\in\{1,\ldots,m\}$,
and $\frac 1p=\frac 1{p_1}+\cdots+\frac 1{p_m}$.
Let $k\in\nn$ and $b\in \dot L^{kr}(\rn)$ with $\frac1r:=\frac1q-\frac1p<\frac 1{p_1'}$,
where $\frac1{p_1}+\frac1{p_1'}=1$.
Let $T$ be an $m$-linear Calder\'on--Zygmund operator.
Then the commutator $T^{(k)}_{b,1}$ is compact from $L^{p_1}(\rn)\times \cdots\times L^{p_m}(\rn)$ to $L^q(\rn)$.
\end{theorem}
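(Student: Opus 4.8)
The plan is to prove Theorem \ref{thm-multi-iterated} by induction on $k$, with Theorem \ref{thm-multi} as the base case $k=1$ and the inductive step reduced to the elementary fact that compactness of a (multi)linear operator is preserved under pre- and post-composition with bounded linear maps. The engine is the trivial identity $T^{(k)}_{b,1}=[b,T^{(k-1)}_{b,1}]_1$, i.e.,
\[
T^{(k)}_{b,1}(f_1,\dots,f_m)=b\cdot T^{(k-1)}_{b,1}(f_1,\dots,f_m)-T^{(k-1)}_{b,1}(bf_1,f_2,\dots,f_m).
\]
Subtracting from $b$ a suitable constant (which alters no iterated commutator, since a multilinear commutator annihilates constants in the commuting slot), we may assume $b\in L^{kr}(\rn)$. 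Because the two terms on the right involve $(k-1)$-fold commutators acting with \emph{different} Lebesgue exponents than those of the conclusion, the induction must be set up for a family of statements: let $P(k)$ assert that, for every $m$-linear Calder\'on--Zygmund operator $T$, every $s_1,\dots,s_m\in(1,\fz)$ with $\frac1s:=\sum_{i=1}^m\frac1{s_i}$, and every $t\in(0,s)$ with $\frac1\rho:=\frac1t-\frac1s<\frac1{s_1'}$, one has: $b\in\dot L^{k\rho}(\rn)$ implies $T^{(k)}_{b,1}$ is compact from $L^{s_1}(\rn)\times\cdots\times L^{s_m}(\rn)$ to $L^t(\rn)$. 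Then $P(1)$ is precisely Theorem \ref{thm-multi} (after the relabeling $(s_i,t,\rho)\leftrightarrow(p_i,q,r)$), and the theorem to be proved is the instance $s_i=p_i$, $t=q$, $\rho=r$ of $P(k)$.

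For the step $P(k-1)\Longrightarrow P(k)$, I would treat the two terms of the displayed identity in turn. For the first term, set $\frac1{t_1}:=\frac1t-\frac1{k\rho}$; since $\frac1t=\frac1s+\frac1\rho$ one computes $\frac1{t_1}=\frac1s+\frac{k-1}{k\rho}$, so $t_1\in(0,s)$, and the parameter $\rho_1$ governing $T^{(k-1)}_{b,1}$ on the tuple $(s_1,\dots,s_m)\to t_1$ satisfies $\frac1{\rho_1}=\frac1{t_1}-\frac1s=\frac{k-1}{k\rho}$, whence $(k-1)\rho_1=k\rho$; thus $b\in\dot L^{k\rho}(\rn)$ is exactly what $P(k-1)$ needs, and the side condition $\frac1{\rho_1}=\frac{k-1}{k\rho}<\frac1\rho<\frac1{s_1'}$ also holds. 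Hence $P(k-1)$ gives that $T^{(k-1)}_{b,1}\colon\prod_{i=1}^m L^{s_i}(\rn)\to L^{t_1}(\rn)$ is compact, and since $b\in L^{k\rho}(\rn)$ and $\frac1t=\frac1{k\rho}+\frac1{t_1}$, multiplication by $b$ is bounded $L^{t_1}(\rn)\to L^t(\rn)$; post-composing the compact operator $T^{(k-1)}_{b,1}$ with it shows the first term is compact. For the second term, multiplication by $b$ is bounded $L^{s_1}(\rn)\to L^{\tilde s_1}(\rn)$ with $\frac1{\tilde s_1}:=\frac1{s_1}+\frac1{k\rho}$, and $\tilde s_1>1$ because $\frac1{k\rho}\le\frac1\rho<\frac1{s_1'}$; hence $(f_1,\dots,f_m)\mapsto(bf_1,f_2,\dots,f_m)$ is a bounded map $\prod_{i=1}^m L^{s_i}(\rn)\to L^{\tilde s_1}(\rn)\times L^{s_2}(\rn)\times\cdots\times L^{s_m}(\rn)$. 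On this new tuple, whose exponents sum to $\frac1s+\frac1{k\rho}$, the operator $T^{(k-1)}_{b,1}$ mapping into $L^t(\rn)$ is governed by $\rho_2$ with $\frac1{\rho_2}=\frac1t-(\frac1s+\frac1{k\rho})=\frac{k-1}{k\rho}$, so again $(k-1)\rho_2=k\rho$; and the side condition $\frac1{\rho_2}<\frac1{\tilde s_1'}=\frac1{s_1'}-\frac1{k\rho}$ is equivalent to $\frac1\rho<\frac1{s_1'}$, which is assumed. Thus $P(k-1)$ yields compactness of $T^{(k-1)}_{b,1}\colon L^{\tilde s_1}(\rn)\times L^{s_2}(\rn)\times\cdots\times L^{s_m}(\rn)\to L^t(\rn)$, and precomposing with the bounded diagonal map keeps it compact. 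Since the difference of two compact operators is compact, $P(k)$ follows.

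The only point requiring real care is the exponent bookkeeping above: in both terms the symbol requirement of the $(k-1)$-fold statement must collapse to exactly $b\in\dot L^{k\rho}(\rn)$ — it does, because $(k-1)\rho_j=k\rho$ in both cases — and the side condition $\frac1\rho<\frac1{s_1'}$ must survive the reduction; it is used \emph{sharply} in the second term, where $\frac1{\rho_2}<\frac1{\tilde s_1'}$ reduces precisely to $\frac1\rho<\frac1{s_1'}$, which is why the hypothesis $\frac1r<\frac1{p_1'}$ is the natural one here. Beyond this arithmetic there is no new analysis, since the whole Fr\'echet--Kolmogorov verification is already packaged in Theorem \ref{thm-multi}, and stability of compactness under composition with bounded maps is elementary. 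Alternatively, one could run the Fr\'echet--Kolmogorov argument of Theorem \ref{thm-multi} directly for the kernel $(b(x)-b(y_1))^k K(x,y_1,\dots,y_m)$, using \cite[Theorem 4.0.1]{h21} for the boundedness/density input and the binomial expansion of $(b(x)-b(y_1))^k-(b(x+\xi)-b(y_1))^k$ in the equicontinuity step; but the inductive argument is shorter and keeps the role of the exponent conditions transparent.
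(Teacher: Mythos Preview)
Your inductive argument is correct, and the exponent bookkeeping is done carefully; in particular, the observation that $(k-1)\rho_1=(k-1)\rho_2=k\rho$ is exactly what makes the hypothesis $b\in\dot L^{kr}(\rn)$ propagate through the induction, and the reduction of the side condition $\frac1{\rho_2}<\frac1{\tilde s_1'}$ back to $\frac1\rho<\frac1{s_1'}$ is checked correctly. The only point I would add for completeness is a one-line remark that compactness of a multilinear map is preserved under precomposition in a single slot with a bounded linear operator (your argument uses this for the second term), which is immediate from the definition via bounded sets but is worth stating since the multilinear notion of compactness is not entirely standard.

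The paper itself omits the proof, merely indicating that one should combine \cite[Theorem~4.0.1]{h21} with Theorem~\ref{thm-multi}. Since \cite[Theorem~4.0.1]{h21} supplies the \emph{boundedness} of the iterated commutator $T^{(k)}_{b,1}$ under the hypothesis $b\in\dot L^{kr}(\rn)$, the paper's intended route is almost certainly the ``alternative'' you sketch in your last paragraph: use that boundedness as the density/approximation input and rerun the Fr\'echet--Kolmogorov verification of Theorem~\ref{thm-multi} for the kernel $(b(x)-b(y_1))^kK(x,y_1,\dots,y_m)$. Your inductive route is genuinely different and has the advantage of being self-contained --- it never invokes \cite[Theorem~4.0.1]{h21}, since the $k$-fold boundedness is recovered as a by-product of the $k$-fold compactness, which in turn is built from Theorem~\ref{thm-multi} alone. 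The cost is the somewhat delicate exponent tracking; the paper's direct approach would trade that for redoing the three Fr\'echet--Kolmogorov conditions with the higher-order kernel, which is more repetitive but requires no new ideas beyond the binomial expansion you mention.
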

\begin{theorem}\label{thm:multi-iterated}
Let $1< q<p<\fz$, $p_i\in(1,\fz)$ for any $i\in\{1,\ldots,m\}$,
$\frac 1p=\frac 1{p_1}+\cdots+\frac 1{p_m}$, and $k\in\nn$.
Let $T$ be an $m$-linear non-degenerate Calder\'on--Zygmund operator
and $b\in L_{\loc}^k(\rn)$ be real-valued.
If the commutator $T^{(k)}_{b,1}$ is bounded
from $L^{p_1}(\rn)\times \cdots\times L^{p_m}(\rn)$ to $L^q(\rn)$,
then $b\in \dot L^{kr}(\rn)$ with $\frac1r:=\frac1q-\frac1p$.
\end{theorem}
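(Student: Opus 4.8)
The plan is to follow, \emph{mutatis mutandis}, the median-method argument from the proof of Theorem \ref{thm:multi} (as in \cite{h21,Li-22}), the only structural novelty being the appearance of a $k$-th power. One checks by induction that, for suitable $f_1,\dots,f_m$ and any $x\notin\bigcap_{i=1}^m\supp f_i$,
$$T^{(k)}_{b,1}(f_1,\dots,f_m)(x)=\int_{\rr^{mn}}\lf[b(x)-b(y_1)\r]^kK(x,y_1,\dots,y_m)\prod_{i=1}^mf_i(y_i)\,dy_1\cdots dy_m,$$
so the factor $b(x)-b(y_1)$ in the representation of $[b,T]_1$ is merely replaced by its $k$-th power. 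The non-degeneracy assumption \eqref{eq:non-degen} still furnishes (see \cite[Proposition 2.2.1 and Remark 4.1.2]{h21}), for each cube $Q$, a translate $\widetilde Q$ with $\ell(\widetilde Q)=\ell(Q)$ and $\dist(Q,\widetilde Q)\gtrsim\ell(Q)$ together with a unimodular $\sigma_Q\in\cc$ such that $\Re(\sigma_QK(x,y_1,\dots,y_m))\sim|Q|^{-m}$ whenever $x\in\widetilde Q$ and $y_1,\dots,y_m\in Q$.

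First I would run the median estimate with $k$-th powers. Fix a sparse family $\mathscr S$ and, for each $S\in\mathscr S$, let $\alpha_S$ be a median of $b$ on $\widetilde S$. The key elementary observation is that, for $x\in\widetilde S\cap\{b\ge\alpha_S\}$ and $y_1\in S\cap\{b\le\alpha_S\}$, one has $b(x)-b(y_1)\ge\alpha_S-b(y_1)\ge0$, whence, by the monotonicity of $t\mapsto t^k$ on $[0,\fz)$, $[b(x)-b(y_1)]^k\ge[\alpha_S-b(y_1)]^k\ge0$; multiplying by $\Re(\sigma_SK)\sim|S|^{-m}$ and integrating $y_2,\dots,y_m$ over $S$ and $y_1$ over $S\cap\{b\le\alpha_S\}$ gives, for every such $x$,
$$\int_S\lf[b(y_1)-\alpha_S\r]_-^k\,dy_1\lesssim|S|\,\Re\lf(\sigma_S\,T^{(k)}_{b,1}(\mathbf 1_{S\cap\{b\le\alpha_S\}},\mathbf 1_S,\dots,\mathbf 1_S)(x)\r),$$
and symmetrically for $[\,\cdot\,]_+^k$ with $\{b\le\alpha_S\}$ and $\{b\ge\alpha_S\}$ interchanged. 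Averaging over the relevant set, which has measure at least $\frac12|S|$, and adding the two halves yields
$$\int_S|b-\alpha_S|^k\,dx\lesssim|S|\lf(\lf\langle\lf|T^{(k)}_{b,1}(\mathbf 1_{S\cap\{b\le\alpha_S\}},\mathbf 1_S,\dots,\mathbf 1_S)\r|\r\rangle_{\widetilde S}+\lf\langle\lf|T^{(k)}_{b,1}(\mathbf 1_{S\cap\{b\ge\alpha_S\}},\mathbf 1_S,\dots,\mathbf 1_S)\r|\r\rangle_{\widetilde S}\r),$$
and a triangle inequality combined with the Hölder bound $|\langle b\rangle_S-\alpha_S|\le|S|^{-1/k}\|b-\alpha_S\|_{L^k(S)}$ lets me replace $\alpha_S$ by $\langle b\rangle_S$ on the left side, up to a constant depending only on $k$.

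Next I would feed this into the $k$-th-power analogue of Proposition \ref{prop:lr}: for $r\in(1,\fz)$ and $b\in L^{kr}_\loc(\rn)$,
$$\|b\|_{\dot L^{kr}(\rn)}^k\sim\sup\lf\{\sum_{S\in\mathscr S}\lz_S\int_S|b-\langle b\rangle_S|^k\,dx:\ \mathscr S\text{ sparse, }\ \sum_{S\in\mathscr S}|S|\lz_S^{r'}\le1\r\},$$
which is exactly Proposition \ref{prop:lr} when $k=1$, follows for general $k$ by the same duality and sparse stopping-time reasoning, and can in any case be extracted from the proof of \cite[Theorem 4.0.1]{h21}. Multiplying the previous display by $\lz_S$ with $\sum_S|S|\lz_S^{r'}\le1$, summing over $S\in\mathscr S$, and then proceeding exactly as in the endgame of the proof of Theorem \ref{thm:multi} — writing $\lz_S=\prod_{i=1}^m\lz_S^{r'/p_i}\cdot\lz_S^{r'/q'}$, which is legitimate because $\sum_{i=1}^m\frac{r'}{p_i}+\frac{r'}{q'}=r'(\frac1p+\frac1{q'})=1$ by $\frac1r=\frac1q-\frac1p$; choosing $S$-dependent unimodular functions $g_S$ to absorb the absolute values; inserting $m+1$ chained families of independent random signs and using the $m$-linearity of $T^{(k)}_{b,1}$ in its function entries so that only the diagonal terms survive the expectation; and finally applying the Hölder inequality, the assumed boundedness of $T^{(k)}_{b,1}$ from $L^{p_1}(\rn)\times\cdots\times L^{p_m}(\rn)$ to $L^q(\rn)$, the pointwise bound $|\sum_S\ve_Sc_S\mathbf 1_S|\le\sum_Sc_S\mathbf 1_S$, and the estimate \cite[Lemma 2.5.4]{h21} for sparse sums of indicators — bounds the whole sum by $C(\sum_S|S|\lz_S^{r'})^{\frac1{p_1}+\cdots+\frac1{p_m}+\frac1{q'}}=C(\sum_S|S|\lz_S^{r'})^{1/r'}\le C$. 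Taking the supremum over all admissible $\mathscr S$ and $\{\lz_S\}$ then gives $\|b\|_{\dot L^{kr}(\rn)}<\fz$, i.e., $b\in\dot L^{kr}(\rn)$.

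I expect the difficulties to be of a bookkeeping rather than conceptual nature: establishing (or locating in the literature) the $k$-th-power characterization of $\dot L^{kr}(\rn)$ above, and verifying in the randomization step that, after expansion by multilinearity and taking expectations, only the diagonal $S_1=\cdots=S_{m+1}$ contributes, with surviving weight exactly $\lz_S$. The one genuinely new ingredient is the monotonicity inequality $[b(x)-b(y_1)]^k\ge[\alpha_S-b(y_1)]^k$ on the relevant subsets; since it costs nothing, the exponent $k$ propagates through without loss, and this is precisely why the natural hypothesis here is $b\in L^k_\loc(\rn)$ rather than merely $b\in L^1_\loc(\rn)$.
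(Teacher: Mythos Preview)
Your proposal is correct and follows the same median-plus-randomization strategy as the paper. The one organizational difference is that you invoke a $k$-th-power sparse characterization of $\dot L^{kr}(\rn)$ as a black box, whereas the paper avoids formulating this as a separate lemma: it applies Proposition~\ref{prop:lr} directly with $kr$ in place of $r$ (so the constraint reads $\sum_{S}|S|\lambda_S^{(kr)'}\le1$), uses H\"older to pass from $\int_S|b-\langle b\rangle_S|\,dx$ to $[\int_S|b-\langle b\rangle_S|^k\,dx]^{1/k}|S|^{1/k'}$, inserts the median bound, and then uses $\ell^r$--$\ell^{r'}$ duality to rewrite the resulting expression as $[\sum_{S}\tau_S|S|\langle|T^{(k)}_{b,1}(\cdots)|\rangle_{\widetilde S}]^{1/k}$ with $\sum_S\tau_S^{r'}|S|\le1$; from there the randomization endgame is identical to yours. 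The paper's route is a little more self-contained, since it relies only on the already-cited Proposition~\ref{prop:lr} rather than a $k$-th-power variant you would still need to prove, but the two arguments are essentially equivalent and your packaging is arguably cleaner once that variant is in hand.
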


\begin{proof}
We sketch the proof of the present theorem.
Let all the symbols be the same as in the present theorem.
First of all, the same arguments as that used in the proof of Theorem \ref{thm:multi}
give us that
\begin{align*}
\int_S |b(x)-\langle b\rangle_S|^k\,dx
&\lesssim   |S|  \lf\langle \lf| T^{(k)}_{b,1}
(\mathbf 1_{S\cap \{b\le \alpha_S\}},
\mathbf 1_S,\cdots, \mathbf 1_S)\r|\r\rangle_{\widetilde S}\\
&\quad+ |S|  \lf\langle \lf| T^{(k)}_{b,1} (\mathbf 1_{S\cap \{b\ge \alpha_S\}},
\mathbf 1_S,\cdots, \mathbf 1_S)\r|\r\rangle_{\widetilde S}.
\end{align*}
Then, for any $\{\lambda_S\}_{S\in \mathscr S}\subset[0,\fz)$ with
$\sum_{S\in \mathscr S} |S|\lambda_S^{(kr)'}\le 1$,
by both the H\"older inequality and
the Riesz representation theorem of $\ell^r$, we have
\begin{align*}
&\sum_{S\in \mathscr S}\lambda_S \int_S |b(x)- \langle b\rangle_S|\,dx\\
&\quad\le \sum_{S\in \mathscr S}\lambda_S
\lf[\int_S |b(x)-\langle b\rangle_S|^k\,dx\r]^{1/k}|S|^{1/{k'}}\\
&\quad\lesssim \sum_{S\in \mathscr S}\lambda_S |S|
\lf\langle \lf| T^{(k)}_{b,1} (\mathbf 1_{S\cap \{b\le \alpha_S\}},
\mathbf 1_S,\cdots, \mathbf 1_S)\r|\r\rangle_{\widetilde S}^{1/k}\\
&\qquad+ \sum_{S\in \mathscr S}\lambda_S |S|
\lf\langle \lf| T^{(k)}_{b,1} (\mathbf 1_{S\cap \{b\ge \alpha_S\}},
\mathbf 1_S,\cdots, \mathbf 1_S)\r|\r\rangle_{\widetilde S}^{1/k}\\
&\quad\lesssim \lf[\sum_{S\in \mathscr S}  |S|
\lf\langle \lf| T^{(k)}_{b,1} (\mathbf 1_{S\cap \{b\le \alpha_S\}},
\mathbf 1_S,\cdots, \mathbf 1_S)\r|\r\rangle_{\widetilde S}^{r}\r]^{1/{(kr)}}
\lf[\sum_{S\in \mathscr S} |S|\lambda_S^{(kr)'}\r]^\frac1{(kr)'}\\
&\qquad+ \lf[\sum_{S\in \mathscr S}  |S|
\lf\langle \lf| T^{(k)}_{b,1} (\mathbf 1_{S\cap \{b\ge \alpha_S\}},
\mathbf 1_S,\cdots, \mathbf 1_S)\r|\r\rangle_{\widetilde S}^{r}\r]^{1/{(kr)}}
\lf[\sum_{S\in \mathscr S} |S|\lambda_S^{(kr)'}\r]^\frac1{(kr)'}\\
&\quad\ls\lf\|\lf\{|S|^\frac1r
\lf\langle \lf| T^{(k)}_{b,1} (\mathbf 1_{S\cap \{b\le \alpha_S\}},
\mathbf 1_S,\cdots, \mathbf 1_S)\r|\r
\rangle_{\widetilde S}\r\}_{S\in\mathscr S}\r\|_{\ell^r}^{1/k}\\
&\qquad+\lf\|\lf\{|S|^\frac1r
\lf\langle \lf| T^{(k)}_{b,1} (\mathbf 1_{S\cap \{b\ge \alpha_S\}},
\mathbf 1_S,\cdots, \mathbf 1_S)\r|\r
\rangle_{\widetilde S}\r\}_{S\in\mathscr S}\r\|_{\ell^r}^{1/k}\\
&\quad\sim \lf[\sum_{S\in \mathscr S} \tau_S |S|
\lf\langle \lf| T^{(k)}_{b,1} (\mathbf 1_{S\cap \{b\le \alpha_S\}},
\mathbf 1_S,\cdots, \mathbf 1_S)\r|\r\rangle_{\widetilde S}\r]^{1/{k}}\\
&\qquad+ \lf[\sum_{S\in \mathscr S} \tau_S |S|
\lf\langle \lf| T^{(k)}_{b,1} (\mathbf 1_{S\cap \{b\ge \alpha_S\}},
\mathbf 1_S,\cdots, \mathbf 1_S)\r|\r\rangle_{\widetilde S}\r]^{1/{k}},
\end{align*}
where the non-negative sequence $\{\tau_S\}_{S\in \mathscr S}$ satisfies
$$\lf\|\lf\{\tau_S |S|^{\frac1{r'}}\r\}_{S\in \mathscr S}\r\|_{\ell^{r'}}
:=\sum_{S\in \mathscr S}\tau_S^{r'}|S|\le 1.$$
Then the remaining arguments are the same as
those used in the proof of Theorem \ref{thm:multi};
we omit the details.
This finishes the proof of Theorem \ref{thm:multi-iterated}.
\end{proof}

\begin{corollary}\label{coro-mk}
Let $1< q<p<\fz$, $p_i\in(1,\fz)$ for any $i\in\{1,\ldots,m\}$,
$\frac 1p=\frac 1{p_1}+\cdots+\frac 1{p_m}$,
and $\frac1r:=\frac1q-\frac1p$.
Let $T$ be an $m$-linear Calder\'on--Zygmund operator,
$b\in L_{\loc}^k(\rn)$ be real-valued, and $k\in\nn$.
Then the following three statements are mutually equivalent:
\begin{enumerate}
\item [{\rm(i)}] $T^{(k)}_{b,1}$ is compact from
$L^{p_1}(\rn)\times \cdots\times L^{p_m}(\rn)$ to $L^q(\rn)$;

\item [{\rm(ii)}] $T^{(k)}_{b,1}$ is bounded from
$L^{p_1}(\rn)\times \cdots\times L^{p_m}(\rn)$ to $L^q(\rn)$;

\item [{\rm(iii)}] $b\in \dot L^{kr}(\rn)$.
\end{enumerate}
\end{corollary}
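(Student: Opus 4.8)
The plan is to obtain the three-way equivalence purely by combining results already in hand, namely Theorems \ref{thm-multi-iterated} and \ref{thm:multi-iterated}, together with the elementary fact that a compact operator is bounded; concretely, I would prove the cycle (iii) $\Longrightarrow$ (i) $\Longrightarrow$ (ii) $\Longrightarrow$ (iii). The implication (i) $\Longrightarrow$ (ii) needs no argument: since $q\in(1,\fz)$, the space $L^q(\rn)$ is a Banach space, so every compact $m$-linear operator into $L^q(\rn)$ maps bounded sets to relatively compact, hence bounded, sets and is therefore bounded.

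For (ii) $\Longrightarrow$ (iii), I would invoke Theorem \ref{thm:multi-iterated} verbatim. Its hypotheses---$1<q<p<\fz$, $\frac1p=\sum_{i=1}^m\frac1{p_i}$, $k\in\nn$, $T$ an $m$-linear non-degenerate Calder\'on--Zygmund operator, $b\in L^k_\loc(\rn)$ real-valued, and $T^{(k)}_{b,1}$ bounded from $L^{p_1}(\rn)\times\cdots\times L^{p_m}(\rn)$ to $L^q(\rn)$---coincide exactly with those of the corollary in this case, so it yields $b\in\dot L^{kr}(\rn)$ with $\frac1r=\frac1q-\frac1p$.

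For (iii) $\Longrightarrow$ (i), I would apply Theorem \ref{thm-multi-iterated}. The one point to check is its side condition $\frac1r<\frac1{p_1'}$, which is not listed among the hypotheses of the corollary but is automatic here: since $q>1$ we have $\frac1q<1$, whence
$$\frac1r=\frac1q-\frac1p<1-\frac1p=1-\frac1{p_1}-\sum_{i=2}^m\frac1{p_i}<1-\frac1{p_1}=\frac1{p_1'},$$
using $m\ge2$ and $p_i\in(1,\fz)$ for the last strict inequality. Hence Theorem \ref{thm-multi-iterated} applies and gives that $T^{(k)}_{b,1}$ is compact from $L^{p_1}(\rn)\times\cdots\times L^{p_m}(\rn)$ to $L^q(\rn)$, closing the cycle.

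Since all the analytic content is already carried by Theorems \ref{thm-multi-iterated} and \ref{thm:multi-iterated} (and ultimately by \cite[Theorem 4.0.1]{h21}), there is no genuine obstacle at this final stage. The only thing requiring a moment of care is bookkeeping: matching the index conventions, recording the harmless verification of $\frac1r<\frac1{p_1'}$ above, and noting that the natural scale in the $k$-fold iterated setting is $\dot L^{kr}(\rn)$ rather than $\dot L^{r}(\rn)$, exactly as $\dot L^{r}(\rn)$ was the natural space in Theorem \ref{thm-cpt-char}.
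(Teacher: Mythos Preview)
Your proposal is correct and follows essentially the same route as the paper: the cycle (i)$\Rightarrow$(ii)$\Rightarrow$(iii)$\Rightarrow$(i) via the trivial ``compact implies bounded'', Theorem~\ref{thm:multi-iterated}, and Theorem~\ref{thm-multi-iterated}, including the same verification that $\frac1r<\frac1{p_1'}$ is automatic from $q>1$. One small inaccuracy: you say the hypotheses of Theorem~\ref{thm:multi-iterated} ``coincide exactly'' with those of the corollary, but the corollary as stated omits the word \emph{non-degenerate}; this appears to be an oversight in the corollary's statement (the paper's own proof likewise invokes Theorem~\ref{thm:multi-iterated} without comment), so your argument is fine once that hypothesis is understood.
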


\begin{proof}
The implication (i) $\Longrightarrow$ (ii) directly follows from the definition
of compact operators.
Moreover, using Theorem \ref{thm:multi-iterated},
we obtain the implication (ii) $\Longrightarrow$ (iii).
Furthermore, Theorem \ref{thm-multi-iterated} shows
the implication (iii) $\Longrightarrow$ (i),
which completes the proof of Corollary \ref{coro-mk}. 
We note that the assumption of Theorem \ref{thm-multi-iterated} 
that $\frac1r:=\frac1q-\frac1p<\frac 1{p_1'}$ follows from the present 
assumptions; namely, we have $\frac1q<1=\frac{1}{p_1}+\frac{1}{p_1'}<\frac{1}{p}+\frac{1}{p_1'}$.
This finishes the proof of Corollary \ref{coro-mk}.
\end{proof}

\begin{remark}
\begin{enumerate}
\item [{\rm(i)}]
By some routine modifications, we find that
Proposition \ref{prop-bdd} and Theorems \ref{thm-multi}, \ref{thm:multi},
\ref{thm-multi-iterated}, and \ref{thm:multi-iterated}
hold true with $[b,T]_1$ replaced by $[b,T]_i$ for any $i\in\{2,\ldots,m\}$.

\item [{\rm(ii)}]
It is still \emph{unclear} whether or not the
assumptions that $q>1$ and that $b$ is real-valued
in Theorems \ref{thm:multi} and \ref{thm:multi-iterated} are necessary.
\end{enumerate}
\end{remark}



\bigskip

\noindent Tuomas Hyt\"onen

\medskip

\noindent Department of Mathematics and Statistics,
University of Helsinki, P.O. Box 68 (Pietari Kalmin katu 5), 00014 Helsinki, Finland

\smallskip

\noindent {\it E-mail}: \texttt{tuomas.hytonen@helsinki.fi}

\bigskip

\noindent Kangwei Li

\medskip

\noindent Center for Applied Mathematics, Tianjin University, Tianjin 300072, The People's Republic of China

\smallskip

\noindent {\it E-mail}: \texttt{kli@tju.edu.cn}

\bigskip

\noindent Jin Tao

\medskip

\noindent Hubei Key Laboratory of Applied Mathematics,
Faculty of Mathematics and Statistics,
Hubei University, Wuhan 430062, The People's Republic of China

\smallskip

\noindent {\it E-mail}: \texttt{jintao@hubu.edu.cn}

\bigskip

\noindent Dachun Yang (Corresponding author)

\medskip

\noindent Laboratory of Mathematics and Complex Systems
(Ministry of Education of China),
School of Mathematical Sciences, Beijing Normal University,
Beijing 100875, The People's Republic of China

\smallskip

\noindent{\it E-mail}: \texttt{dcyang@bnu.edu.cn}

\end{document}